\newtheorem{theorem}{Theorem}[section]
\newtheorem{proposition}[theorem]{Proposition}
\newtheorem{lemma}[theorem]{Lemma}
\newtheorem{corollary}[theorem]{Corollary}
\newtheorem{observation}[theorem]{Observation}
\newtheorem{example}[theorem]{Example}
\newtheorem{conj}[theorem]{Conjecture}
\def\x{\mathbf x}
\def\y{\mathbf y}
\def\cC{\mathcal C}
\def\cH{\mathcal H}
\def\cX{\mathcal X}
\newcommand{\fqq}{\mathbb {F}_{q^2}}
\newcommand{\fqs}{\mathbb {F}_{q^6}}
\def\K{\mathbb{K}}
\title{Weierstrass semigroups on the Giulietti--Korchm\'aros curve}
\date{}
\author{Peter Beelen and Maria Montanucci}
\begin{document}
\maketitle

\begin{abstract}
In this article we explicitly determine the structure of the Weierstrass semigroups $H(P)$ for any point $P$ of the Giulietti--Korchm\'aros curve $\cX$. We show that as the point varies, exactly three possibilities arise: One for the $\mathbb{F}_{q^2}$-rational points (already known in the literature), one for the $\mathbb{F}_{q^6} \setminus \mathbb{F}_{q^2}$-rational points, and one for all remaining points. As a result, we prove a conjecture concerning the structure of $H(P)$ in case $P$ is a $\mathbb{F}_{q^6} \setminus \mathbb{F}_{q^2}$-rational point. As a corollary we also obtain that the set of Weierstrass points of $\cX$ is exactly its set of $\mathbb{F}_{q^6}$-rational points.
\end{abstract}

\thanks{{\em Math.~Subj.~Class.:} Primary:  11G20.     \    Secondary:  11R58, 14H05, 14H55.  }

\thanks{{\em Keywords:} Giulietti--Korchm\'aros maximal curve, Weierstrass semigroup, Weierstrass points.}

\section{Introduction}

Let $\cC$ be a nonsingular, projective algebraic curve of genus $g$ defined over a field $\mathbb{F}$. Let $P$ be a rational point on $\cC$. The \textit{Weierstrass semigroup} $H(P)$ is defined as the set of integers $k$ such that there exists a function on $\cC$ having pole divisor exactly $kP$. More generally $H(P)$ can be defined for any point $P$ on $\cC$ by considering $\cC$ as an algebraic curve over the algebraic closure of $\mathbb{F}$. It is clear that $H(P)$ is a subset of natural numbers $\mathbb{N}=\{0,1,2,\ldots\}$. The Weierstrass gap Theorem, see \cite[Theorem 1.6.8]{Sti}, states that the set $G(P):= \mathbb{N} \setminus H(P)$ contains exactly $g$ elements, which are called \textit{gaps}. The structure of $H(P)$ is not always the same for every point $P$ of $\cC$. However, it is known that for generically the semigroup $H(P)$ is the same, but there can exist finitely many points of $\cC$, called \textit{Weierstrass points}, with a different gap set. These points are of intrinsic interest, for example in St\"ohr--Voloch theory \cite{SV}, but in case $\mathbb{F}=\mathbb{F}_q$, the finite field with $q$ elements, they also occur in the study of algebraic geometry (AG) codes \cite{TV1991}. In this context, a commonly studied class of curves are the so-called \textit{maximal curves}, that is, algebraic curves defined over a finite field $\mathbb{F}_q$ having as many rational points as possible according to the Hasse--Weil bound. More precisely, an algebraic curve $\cC$ with genus $g(\cC)$ and defined over $\mathbb{F}_q$ is said to be an $\mathbb{F}_{q}$-maximal curve if it has $q+1+2g(\cC)\sqrt{q}$ points defined over $\mathbb{F}_q$. Clearly, this can only be the case if the cardinality $q$ of the finite field is a square.

An important and well-studied example of an $\mathbb{F}_{q^2}$-maximal curve is given by the Hermitian curve $\mathcal{H}$.
%It is a plane curve, which can be defined by the affine equation
%$$
%Y^{q+1}=X^q+X.
%$$
For fixed $q$, the curve $\mathcal{H}$ has the largest possible genus $g(\mathcal{H}) =q(q-1)/2$ that an $\mathbb{F}_{q^2}$-maximal curve can have. The Weierstrass points on $\mathcal H$ and the precise structure of the semigroups for $P$ on $\mathcal{H}$ are known; see \cite{GV}. By a result commonly attributed to Serre, see \cite[Proposition 6]{L1987}, any $\mathbb{F}_{q^2}$-rational curve which is covered by an $\mathbb{F}_{q^2}$-maximal curve is also $\mathbb{F}_{q^2}$-maximal. Most of the known maximal curves are subcovers of the Hermitian curve. The first known example of a maximal curve which is not a subcover of the Hermitian curve was constructed by Giulietti and Korchm\'aros; see \cite{GK}. This curve is an $\mathbb{F}_{q^6}$-maximal curve and commonly called the Giulietti--Korchm\'aros (GK) curve. The aim of this paper is to complete the description of the Weierstrass semigroups occurring for this curve.

The Weierstrass semigroup for any $\mathbb{F}_{q^2}$-rational point of $\cX$ was computed in \cite{GK}, but the structure of the Weierstrass semigroup $H(P)$ where $P \not\in \cX(\mathbb{F}_{q^2})$ is not known, except for $q \le 9$, \cite{FG2010,D2011}. Based on the available data for small $q$, a conjecture concerning the structure of $H(P)$ was stated in \cite{D2011} for $P \in \cX(\mathbb{F}_{q^6}) \setminus \cX(\mathbb{F}_{q^2})$. For $P \not\in \cX(\fqs)$ nothing specific is known about $H(P).$ In this article we determine settle the conjecture from \cite{D2011} and also determine the structure of the generic semigroup for $P$ on $\cX$. More precisely, we show the following theorem.
\begin{theorem} \label{mainth}
Let $q$ be a prime power and let $P$ be a point of the Giulietti--Korchm\'aros curve $\cX$.
The Weierstrass semigroup $H(P)$ is given by
\begin{itemize}
\item $H(P)=\langle q^3 -q^2 +q, q^3, q^3 + 1 \rangle,$ \ if $P \in \cX(\mathbb{F}_{q^2})$;
\item $H(P)=\langle q^3-q+1,q^3+1,q^3+i(q^4-q^3-q^2+q-1) \mid i=0,\ldots,q-1\rangle,$
\ if $P \in \cX(\mathbb{F}_{q^6}) \setminus \cX(\mathbb{F}_{q^2})$;
\item $H(P)=\mathbb{N} \setminus G,$ \ if $P \not\in \cX(\mathbb{F}_{q^6})$, where
$$G=\left\{iq^3+kq+m(q^2+1)+\sum_{s=1}^{q-2} n_s ((s+1)q^2)+j+1 \mid i,j,k,m,n_1,\ldots,n_{q-2} \in \mathbb{Z}_{\geq0}, \ j \le q-1 \ \makebox{and}\right.$$
$$\left.i+j+k+mq+\sum_{s=1}^{q-2} n_s ((s+1)q-s) \leq q^2-2\right\}.$$
\end{itemize}
\end{theorem}
As mentioned above, the case $P \in \cX(\mathbb{F}_{q^2})$ is already known and taken from \cite{GK}. As a bonus, we will also obtain the set of Weierstrass points of $\cX$.
\begin{corollary} \label{mainth2}
Let $W$ denote the set of Weierstrass points of the Giulietti--Korchm\'aros curve $\cX$. Then $W=\cX(\mathbb{F}_{q^6})$.
\end{corollary}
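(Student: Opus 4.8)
The plan is to deduce Corollary~\ref{mainth2} directly from Theorem~\ref{mainth} by comparing the three semigroups appearing there. By definition, the Weierstrass points are exactly those points $P$ whose gap set $G(P)=\mathbb{N}\setminus H(P)$ differs from the gap set at a generic point. Theorem~\ref{mainth} tells us that, as $P$ varies over $\cX$, only three semigroups occur, according to whether $P\in\cX(\mathbb{F}_{q^2})$, $P\in\cX(\mathbb{F}_{q^6})\setminus\cX(\mathbb{F}_{q^2})$, or $P\notin\cX(\mathbb{F}_{q^6})$. Since the last case applies to all but finitely many points of $\cX$ (the complement $\cX(\mathbb{F}_{q^6})$ being a finite set), the semigroup attached to points outside $\cX(\mathbb{F}_{q^6})$ is the \emph{generic} one. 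Consequently $P$ is a Weierstrass point precisely when its semigroup is one of the other two, i.e.\ precisely when $P\in\cX(\mathbb{F}_{q^6})$.

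First I would record that $\cX(\mathbb{F}_{q^6})$ is finite while $\cX$ (viewed over the algebraic closure) is infinite, so the generic semigroup must be the one in the third bullet; this identifies $H(P)=\mathbb{N}\setminus G$ as the non-Weierstrass behaviour. It then suffices to verify that the semigroups in the first two bullets genuinely differ from this generic one, so that every point of $\cX(\mathbb{F}_{q^6})$ is indeed a Weierstrass point. The cleanest way to do this is to exhibit, for each of the first two cases, a single nongap that lies in $H(P)$ but not in the generic semigroup (or vice versa); equivalently, one shows the three gap sets are pairwise distinct as subsets of $\mathbb{N}$. For the $\mathbb{F}_{q^2}$-rational points this is classical and can be read off from the generators $\langle q^3-q^2+q,\,q^3,\,q^3+1\rangle$, whose smallest positive nongap is $q^3-q^2+q$; for the generic semigroup one computes the smallest positive nongap from the description of $G$ and checks it is strictly smaller, giving an immediate distinction.

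The verification for $P\in\cX(\mathbb{F}_{q^6})\setminus\cX(\mathbb{F}_{q^2})$ is analogous: the multiplicity (least positive nongap) of $\langle q^3-q+1,\,q^3+1,\dots\rangle$ is $q^3-q+1$, and comparing this with the multiplicities of the other two semigroups already separates all three cases, since $q^3-q^2+q$, $q^3-q+1$, and the generic multiplicity are pairwise distinct for every prime power $q$. Because all three candidate semigroups have exactly $g$ gaps (as they must, being Weierstrass semigroups on a curve of the fixed genus $g=\tfrac12(q^3-q)(q^2-q+1)$ by the Weierstrass gap Theorem), distinctness of even one invariant such as the multiplicity forces the gap sets themselves to be distinct. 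Thus the points of $\cX(\mathbb{F}_{q^6})$ are exactly those at which the gap set deviates from the generic one, which is the definition of a Weierstrass point, and we conclude $W=\cX(\mathbb{F}_{q^6})$.

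I expect the only genuine subtlety to be the bookkeeping needed to extract and compare the multiplicities, especially reading off the smallest positive element of the generic semigroup from the explicit but unwieldy description of $G$ in the third bullet; once the three multiplicities are computed and seen to be pairwise distinct, the corollary follows formally. No deeper geometric input is required beyond Theorem~\ref{mainth} and the finiteness of $\cX(\mathbb{F}_{q^6})$.
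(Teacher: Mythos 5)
Your overall strategy coincides with the paper's: use Theorem \ref{mainth}, identify the generic semigroup as the one attached to the cofinite set of points outside $\cX(\mathbb{F}_{q^6})$, and then show that the two semigroups occurring on $\cX(\mathbb{F}_{q^6})$ genuinely differ from the generic one. The gap is in your choice of distinguishing invariant. The three multiplicities are $q^3-q^2+q$ for $P\in\cX(\mathbb{F}_{q^2})$, $q^3-q+1$ for $P\in\cX(\mathbb{F}_{q^6})\setminus\cX(\mathbb{F}_{q^2})$, and $q^3-1$ for $P\notin\cX(\mathbb{F}_{q^6})$ (note that establishing this last value from the description of $G$ is itself a nontrivial case analysis, which the paper carries out in a separate corollary). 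Your claim that these are pairwise distinct \emph{for every prime power} $q$ is false: for $q=2$ one has $q^3-q+1=7=q^3-1$, so the second and third semigroups have the same multiplicity. The paper's own $q=2$ example shows this concretely — the semigroup at points of $\cX(\mathbb{F}_{64})\setminus\cX(\mathbb{F}_4)$ begins $0,7,8,9,\ldots$ and the generic one begins $0,7,8,12,\ldots$; both have multiplicity $7$. So your argument fails to separate the $\mathbb{F}_{q^6}$-rational points from the generic points when $q=2$, and the corollary is left unproved in that case (it does work for $q\ge 3$).

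The fix is to use the invariant the paper actually uses: the largest gap. Both semigroups attached to points of $\cX(\mathbb{F}_{q^6})$ are symmetric, with largest gap $2g(\cX)-1$, whereas by Observation \ref{obs:largestgapinG} the largest gap of the generic semigroup is $2g(\cX)-q^2+1 < 2g(\cX)-1$; this separates $\cX(\mathbb{F}_{q^6})$ from the remaining points uniformly in $q$. (Alternatively, for $q=2$ only, one can observe that $9$ is a non-gap at points of $\cX(\mathbb{F}_{64})\setminus\cX(\mathbb{F}_4)$ but a gap at generic points.) Two further slips, neither fatal: the generic multiplicity $q^3-1$ is \emph{larger}, not smaller, than $q^3-q^2+q$; and your genus formula $\tfrac12(q^3-q)(q^2-q+1)$ is incorrect — the genus is $g(\cX)=(q^5-2q^3+q^2)/2$, which for $q=2$ equals $10$, not $9$.
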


The paper is organized as follows: In the next section we give the necessary background on the GK curve as well as some results on Weierstrass semigroups and their gaps that we will need later. In section three, we settle the conjecture from \cite{D2011} concerning $H(P)$ for $P \in \cX(\mathbb{F}_{q^6}) \setminus \cX(\mathbb{F}_{q^2})$, while in section four, we compute the Weierstrass semigroup for $P \not \in \cX(\mathbb{F}_{q^6})$. We finish with some concluding remarks and observations.

\section{The Giulietti--Korchm\'aros curve} \label{sec2}

Let $q$ be a prime power and $\K=\overline{\mathbb{F}}_q$. The Giulietti--Korchm\'aros (GK) curve $\cX$ is a non-singular curve in ${\rm PG}(3,\K)$ defined by the affine equations
\begin{equation}\label{eq:GK}
\cX: \left\{
\begin{array}{l}
Y^{q+1}=X^q+X,\\
Z^{q^2-q+1}=Y^{q^2}-Y.\\
\end{array}
\right.
\end{equation}
This curve has genus $g(\cX)=(q^5-2q^3+q^2)/2$ and $q^8-q^6+q^5+1$ $\mathbb{F}_{q^6}$-rational points. The curve $\cX$ has been introduced in \cite{GK}, where it was proved that $\cX$ is maximal over $\fqs$, that is, the number $|\cX(\fqs)|$ of $\fqs$-rational points of $\cX$ equals $q^6+1+2gq^3$.
Also, for $q>2$, the curve $\cX$ is not $\fqs$-covered by the Hermitian curve maximal over $\fqs$; $\cX$ was the first maximal curve shown to have this property. Note that equation \eqref{eq:GK} implies that $\cX$ is a cover of the Hermitian curve over $\fqq$ given by the affine equation $Y^{q+1}=X^q+X$. We will denote this curve by $\cH$.
%Also, for $q>2$, it is the unique example of maximal curve which is not $\mathbb{F}_{q^2}$-covered by the Hermitian curve $\mathcal{H}(\mathbb{F}_{q^2})$.

The automorphism group ${\rm Aut}(\cX)$ of $\cX$ is defined over $\fqs$ and has order $q^3(q^3+1)(q^2-1)(q^2-q+1)$.  Moreover, it has a normal subgroup isomorphic to ${\rm SU(3,q)}$, the automorphism group of the Hermitian curve $\cH$. %If $(3,q+1)=1$, then ${\rm Aut}(\cX)\cong{\rm SU}(3,q)\times C_{q^2-q+1}$, where $C_{q^2-q+1}$ is a cyclic group of order $q^2-q+1$. If $(3,q+1)=3$, then ${\rm SU}(3,q)\times C_{(q^2-q+1)/3}$ is isomorphic to a normal subgroup of ${\rm Aut}(\cX)$ of index $3$; see \cite[Theorem 6]{GK}.
The set $\cX(\fqs)$ of the $\mathbb{F}_{q^6}$-rational points of $\cX$ splits into two orbits under the action of ${\rm Aut}(\cX)$:
one orbit $\mathcal O_1=\cX(\fqq)$ of size $q^3+1$, which coincides with the intersection between $\cX$ and the plane $Z=0$; and another orbit $\mathcal O_2=\cX(\fqs)\setminus\cX(\fqq)$ of size $q^3(q^3+1)(q^2-1)$; see \cite[Theorem 7]{GK}. The orbits $\mathcal O_1$ and $\mathcal O_2$ are the short orbits of ${\rm Aut}(\cX)$, that is, the unique orbits of points of $\cX$ having a non-trivial stabilizer in ${\rm Aut}(\cX)$.

Let $x,y,z\in \K(\cX)$ be the coordinate functions of the function field of $\cX$, which satisfy $y^{q+1}=x^q+x$ and $z^{q^2-q+1}=y^{q^2}-y$. Then we denote by $P_{(a,b,c)}$ the affine point of $\cX$ with coordinates $(a,b,c)$ and by $P_\infty$ the unique point at infinity. Similarly, we denote by $Q_{(a,b)}$ the affine point of the Hermitian curve $\cH$ with coordinates $(a,b)$ and by $Q_\infty$ its unique point at infinity.

The Weierstrass semigroup at $P_\infty$, and hence at every $\mathbb{F}_{q^2}$-rational point of $\cX$ (since they lie in the same short orbit $\mathcal O_1$ of ${\rm Aut}(\cX)$) was computed in \cite{GK}.
\begin{proposition} \rm{\cite[Proposition 6.2]{GK}}
The Weierstrass semigroup of $\cX$ at $P_\infty$ is generated by $q^3 -q^2 +q$, $q^3$, $q^3 + 1$.
\end{proposition}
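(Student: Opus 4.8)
The natural route is to realise the three proposed generators as the pole orders at $P_\infty$ of the coordinate functions $x,y,z$, and then to verify that the numerical semigroup they generate already accounts for all $g(\cX)$ gaps. First I would compute the valuations at $P_\infty$ using the tower $\cX \to \cH \to \mathbb{P}^1$. On the Hermitian curve $\cH\colon y^{q+1}=x^q+x$ the unique place at infinity $Q_\infty$ satisfies $v_{Q_\infty}(x)=-(q+1)$ and $v_{Q_\infty}(y)=-q$: the map $x$ is totally ramified at $Q_\infty$, and then $(q+1)v_{Q_\infty}(y)=v_{Q_\infty}(x^q+x)=-q(q+1)$. In the degree $q^2-q+1$ extension defining $\cX$ one has $v_{Q_\infty}(y^{q^2}-y)=-q^3$, since the pole of $y^{q^2}$ dominates. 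Because $q^3=(q+1)(q^2-q+1)-1$, the pole order $q^3$ is coprime to $q^2-q+1$, and $q^2-q+1\equiv 1\pmod p$ is prime to the characteristic $p$; hence by Kummer theory $Q_\infty$ is totally ramified in $\cX\to\cH$, with a unique place $P_\infty$ above it and ramification index $e=q^2-q+1$. Multiplying the $\cH$-valuations by $e$ gives $v_{P_\infty}(x)=-(q^3+1)$ and $v_{P_\infty}(y)=-(q^3-q^2+q)$, while $(q^2-q+1)\,v_{P_\infty}(z)=v_{P_\infty}(y^{q^2}-y)=-(q^2-q+1)q^3$ yields $v_{P_\infty}(z)=-q^3$.

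From these pole orders it is immediate that $q^3-q^2+q$, $q^3$ and $q^3+1$ are non-gaps, so the numerical semigroup $S:=\langle q^3-q^2+q,\,q^3,\,q^3+1\rangle$ is contained in $H(P_\infty)$. The Weierstrass gap theorem tells us that $G(P_\infty)=\mathbb{N}\setminus H(P_\infty)$ has exactly $g(\cX)=(q^5-2q^3+q^2)/2$ elements. Since $S\subseteq H(P_\infty)$ forces $G(P_\infty)\subseteq\mathbb{N}\setminus S$, it therefore suffices to prove that $S$ has exactly $g(\cX)$ gaps; equality of the two finite complements then forces $S=H(P_\infty)$.

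The crux is thus the purely combinatorial computation of the genus of $S$. I would carry it out through the Ap\'ery set of $S$ with respect to $a:=q^3-q^2+q=q(q^2-q+1)$, using the standard formula
\[
|\mathbb{N}\setminus S|=\frac{1}{a}\sum_{w\in\mathrm{Ap}(S,a)}w-\frac{a-1}{2}.
\]
The arithmetic facts that organise the count are $q^3\equiv q^2-q\pmod a$ and $q^3+1\equiv q^2-q+1\pmod a$, together with the identity $q\,(q^3+1)=(q+1)a$, which shows that $q^3+1$ has additive order exactly $q$ modulo $a$. Using these one can parametrise each residue class modulo $a$ by a representative of the form $\beta q^3+\gamma(q^3+1)$ with $0\le\gamma\le q-1$, identify the minimal such representative, and sum the resulting Ap\'ery values in closed form. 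I expect this bookkeeping — verifying that the chosen representatives are genuinely minimal in their classes and evaluating the sum to land exactly on $(q^5-2q^3+q^2)/2$ — to be the main obstacle. A convenient cross-check is the equivalent curve-theoretic statement that the monomials $x^iy^jz^k$ with $i\ge 0$, $0\le j\le q$ and $0\le k\le q^2-q$ have pairwise distinct pole orders at $P_\infty$ and furnish a basis adapted to the spaces $L(mP_\infty)$, so that counting admissible triples reproduces the gap count directly.
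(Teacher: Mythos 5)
Your proposal is correct, and since the paper itself gives no proof of this proposition (it is quoted directly from \cite{GK}), the only comparison available is with the standard argument — which is essentially what you give. Your valuation computations are right: $Q_\infty$ is totally ramified in $\cX\to\cH$ because $v_{Q_\infty}(y^{q^2}-y)=-q^3$ is prime to the Kummer degree $q^2-q+1$, so $v_{P_\infty}(x)=-(q^3+1)$, $v_{P_\infty}(y)=-(q^3-q^2+q)$, $v_{P_\infty}(z)=-q^3$; as $x,y,z$ have no other poles, $S:=\langle q^3-q^2+q,q^3,q^3+1\rangle\subseteq H(P_\infty)$, and by the gap theorem it suffices to check that $S$ has exactly $(q^5-2q^3+q^2)/2$ gaps. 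The Ap\'ery bookkeeping you flag as the main obstacle does go through: writing $a=q(q^2-q+1)$, one has $q^3\equiv -1$, $q^3+1\equiv 0 \pmod{q^2-q+1}$ and $q^3\equiv 0$, $q^3+1\equiv 1\pmod q$, so by CRT the $a$ numbers $\beta q^3+\gamma(q^3+1)$ with $0\le\beta\le q^2-q$, $0\le\gamma\le q-1$ hit all residue classes mod $a$ exactly once; moreover any element $\alpha a+\beta'q^3+\gamma'(q^3+1)$ of $S$ in the same class forces $\beta'=\beta+t(q^2-q+1)$, $\gamma'=\gamma+uq$ with $t,u\ge 0$, and the identities $(q^2-q+1)q^3=q^2a$ and $q(q^3+1)=(q+1)a$ show this element equals $\beta q^3+\gamma(q^3+1)+(\alpha+tq^2+u(q+1))a$, proving minimality. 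Selmer's formula then yields $\frac{q^3(q^2-q)}{2}+\frac{(q^3+1)(q-1)}{2}-\frac{q^3-q^2+q-1}{2}=\frac{q^5-2q^3+q^2}{2}$, as needed. One remark: you could replace this entire computation by the very tool the paper uses in Lemma \ref{genusS}, namely telescopic semigroups. The sequence $(q^3-q^2+q,\,q^3,\,q^3+1)$ is telescopic, with $d_1=q(q^2-q+1)$, $d_2=q$, $d_3=1$ and $q^3+1=(q+1)(q^2-q+1)\in\langle q^2-q+1,q^2\rangle$, so formula \eqref{gentelescopic} gives $g(S)=\frac12\bigl(1-(q^3-q^2+q)+(q^2-q)q^3+(q-1)(q^3+1)\bigr)=\frac{q^5-2q^3+q^2}{2}$ in one line, avoiding the Ap\'ery-set analysis altogether.
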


Before describing what is known about $H(P)$ for $P \not \in \cX(\fqq)$, we introduce several functions on $\cX$ and give their divisors. Some of these functions can be interpreted as functions on $\cH$ as well and therefore have a divisor on $\cH$. To differentiate, we will write $(f)_\cH$ (resp. $(f)_\cX$) for divisors on the Hermitian curve $\cH$ (resp. the GK curve $\cX$).
Given a point $P=P_{(a,b,c)}$ on $\cX$, we define the functions
\begin{equation} \label{tilda}
\tilde{x}_{P}=-a^q-x+b^qy, \quad \tilde{y}_P=y-b, \quad \tilde{z}_P=-a^{q^3}-x+b^{q^3}y+c^{q^3}z.
\end{equation}
%Note that the functions $\tilde{x}_{P}$ and $\tilde{y}_{P}$ also can be interpreted as functions on $\cH$. 
Then it is not hard to show the following. %Compare \cite[Thm.9.79]{HKT}.
\begin{align}
% (\tilde{x}_{P})_\cH & =qQ_{(a,b)}+Q_{(a^{q^2},b^{q^2})}-(q+1)Q_\infty,\label{eq:divxpH}\\
 (\tilde{x}_{P})_\cX &=q\sum_{\xi^{q^2-q+1}=1}P_{(a,b,\xi c)}+\sum_{\xi^{q^2-q+1}=1}P_{(a^{q^2},b^{q^2},\xi c^{q^2})}-(q^3+1)P_{\infty},\label{eq:divxpX}\\
% (\tilde{y}_{P})_\cH& =\sum_{s^q+s=0} Q_{(a+s,b)}-qQ_\infty,\label{eq:divypH}\\
 (\tilde{y}_{P})_\cX & =\sum_{s^q+s=0, \  \xi^{q^2-q+1}=1} P_{(a+s,b,\xi c)}-(q^3-q^2+q)P_\infty,\label{eq:divypX}\\
 (\tilde{z}_{P})_\cX & =q^3 P_{(a,b,c)}+{P_{(a^{q^6},b^{q^6},c^{q^6})}}-(q^3+1)P_\infty,\label{eq:divzpX}\\
 (z)_\cX & =\sum_{P\in\cX(\fqq),P\ne P_\infty} P \, - \, q^3P_{\infty}.\label{eq:divz}
\end{align}

Now let $P=P_{(a,b,c)}$ be a fixed $\mathbb{F}_{q^6}$-rational point of $\cX$ which is not $\mathbb{F}_{q^2}$-rational (implying $c \neq 0$). In this case equation \eqref{eq:divzpX} implies:
\begin{equation}\label{eq:divzpX6}
(\tilde{z}_{P})_\cX=(q^3+1)(P-P_\infty) \ \makebox{for} \ P=P_{(a,b,c)} \in \cX(\fqs).
\end{equation}
The Weierstrass semigroup $H(P)$ is only completely known in finitely many cases if $P\in \cX(\fqs)\setminus \cX(\fqq)$. It was computed for $q=2$ and $q=3$ in \cite{FG2010} and for $4 \le q \le 9$ in \cite{D2011}. Also in \cite{D2011}, the following partial information was obtained for general $q$: Equations \eqref{eq:divxpX}, \eqref{eq:divypX} and \eqref{eq:divzpX6} imply that the functions $1/\tilde z_P, \tilde y_P / \tilde z_P, \tilde x_P / \tilde z_P$ have poles only in $P$ of orders $q^3+1$, $q^3$ and $q^3-q+1$ respectively. Hence
\begin{equation}\label{eq:somepoles}
\langle q^3-q+1,q^3,q^3+1\rangle \subseteq H(P) \ \makebox{for} \ P \in \cX(\fqs) \setminus \cX(\fqq).
\end{equation}
Based on this and the results for $q \le 9$, the following conjecture was stated in \cite{D2011}, which we will prove in the next section.
\begin{conj}\label{conjD}
The Weierstrass semigroup $H(P)$ of $\cX$ at $P \in \cX(\mathbb{F}_{q^6}) \setminus \cX(\mathbb{F}_{q^2})$ is given by
$$H(P)=\langle q^3-q+1,q^3+1,q^3+i(q^4-q^3-q^2+q-1) \mid i=0,\ldots,q-1\rangle.$$
\end{conj}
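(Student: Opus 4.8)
Write $S$ for the numerical semigroup appearing on the right-hand side of Conjecture~\ref{conjD}, generated by $q^3-q+1$, $q^3+1$ and the integers $N_i:=q^3+i(q^4-q^3-q^2+q-1)$ for $i=0,\dots,q-1$ (so that $N_0=q^3$). The plan is to prove $S\subseteq H(P)$ by producing suitable functions, and then to upgrade this to equality by showing that $S$ has exactly $g=g(\cX)=(q^5-2q^3+q^2)/2$ gaps. By \eqref{eq:somepoles} we already have $q^3-q+1,\,q^3,\,q^3+1\in H(P)$, so for the first inclusion it only remains to show $N_i\in H(P)$ for $i=1,\dots,q-1$; that is, to exhibit for each such $i$ a function on $\cX$ whose pole divisor is exactly $N_iP$.

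The construction of these functions rests on the relation $(\tilde z_P)_\cX=(q^3+1)(P-P_\infty)$ from \eqref{eq:divzpX6}. Since $\tilde z_P$ has its only zero at $P$ and its only pole at $P_\infty$, multiplication by $\tilde z_P^{-k}$ identifies $L(k(q^3+1)P_\infty)$ with $L(k(q^3+1)P)$ and sends a function $g$ with poles supported only at $P_\infty$ to a function $g\,\tilde z_P^{-k}$ whose only pole is at $P$, of order $k(q^3+1)-\ord_P(g)$. Hence $N_i\in H(P)$ as soon as, for some $k$ with $k(q^3+1)\ge N_i$, one finds $g$ with poles only at $P_\infty$ and with $\ord_P(g)$ equal to $k(q^3+1)-N_i$ exactly. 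I would assemble such $g$ from the explicit functions $\tilde x_P,\tilde y_P,z$, whose polar divisors are supported at $P_\infty$ and whose full divisors are given in \eqref{eq:divxpX}, \eqref{eq:divypX} and \eqref{eq:divz}: a local analysis at $P$, where $\tilde y_P$ is a uniformizer, records the vanishing orders $\ord_P(\tilde x_P)=q$ and $\ord_P(\tilde z_P)=q^3+1$, and one tunes a monomial $z^a\tilde x_P^{\,b}\tilde y_P^{\,c}$ so that its vanishing order at $P$ meets the prescribed value while its pole at $P_\infty$ is absorbed by the factor $\tilde z_P^{-k}$.

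Next I would count the gaps of the explicit numerical semigroup $S$ and check that their number is $g$. Concretely, I would give a closed parametrization of $\mathbb{N}\setminus S$ (in the same spirit as the gap set displayed for the generic point in Theorem~\ref{mainth}), or equivalently compute the Ap\'ery set of $S$ with respect to its multiplicity $q^3-q+1$ and apply Selmer's formula; either way this is a finite, if lengthy, combinatorial verification. As a consistency check, for $q=2$ the generators are $7,8,9,13$, the gap set is $\{1,2,3,4,5,6,10,11,12,19\}$, and its cardinality $10$ agrees with $g(\cX)=10$. Combining the two steps then finishes the proof: since $S\subseteq H(P)$, the gap set $G(P)=\mathbb{N}\setminus H(P)$ is contained in $\mathbb{N}\setminus S$; by the Weierstrass gap Theorem $|G(P)|=g$, and by the gap count $|\mathbb{N}\setminus S|=g$, so the two sets coincide and $H(P)=S$, which is Conjecture~\ref{conjD}.

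I expect the genuine difficulty to lie in the first step: ensuring that the pole order at $P$ is exactly $N_i$ (and not accidentally smaller) while no pole is introduced away from $P$. The obstruction is that, among the functions at hand, only $\tilde z_P$ has its vanishing order at $P$ equal to its pole order at $P_\infty$, and this vanishing comes in blocks of $q^3+1$; realizing the residual vanishing orders $k(q^3+1)-N_i$ forces the use of the less efficient functions $\tilde x_P,\tilde y_P,z$, whose poles at $P_\infty$ are comparatively large, so the cancellation of the pole at $P_\infty$ must be arranged with care. It is this bookkeeping, rather than the gap count, that I expect to carry the weight of the argument.
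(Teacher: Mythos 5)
Your overall outline is the same as the paper's: show the listed generators are pole numbers at $P$, then show the numerical semigroup they generate (your $S$, the paper's $T$) has exactly $g(\cX)$ gaps, and conclude by the Weierstrass gap theorem. The reduction via $\tilde z_P$ (using $(\tilde z_P)_\cX=(q^3+1)(P-P_\infty)$ to trade poles at $P$ for vanishing at $P$ of functions regular outside $P_\infty$) is also exactly what the paper does. The genuine gap is in the step you yourself flag as carrying the weight: your concrete mechanism for realizing $N_i=q^3+i(q^4-q^3-q^2+q-1)$, namely tuning a monomial $g=z^a\tilde x_P^b\tilde y_P^c$, provably fails. By \eqref{eq:divxpX}, \eqref{eq:divypX}, \eqref{eq:divz} one has $v_P(g)=bq+c$ (note $v_P(z)=0$ since $P\notin\cX(\fqq)$) and pole order $aq^3+b(q^3+1)+c(q^3-q^2+q)$ at $P_\infty$. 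The requirements $bq+c=k(q^3+1)-N_i$ and $aq^3+b(q^3+1)+c(q^3-q^2+q)\le k(q^3+1)$ combine, after eliminating $k$, into $aq^3+b(q^3-q+1)+c(q^3-q^2+q-1)\le q^3+i(q^4-q^3-q^2+q-1)$. Already for $q=2$, $i=1$ this reads $8a+7b+5c\le 13$, which forces $2b+c\le 3$, while the first requirement demands $2b+c=9k-13\equiv 5\pmod 9$, hence $2b+c\ge 5$: no monomial works for any $k$. The missing idea (the paper's Lemma \ref{functions}) is to use functions that are \emph{not} products of $z,\tilde x_P,\tilde y_P$ but involve the Frobenius twists $\tilde x_P^{(1)},\tilde x_P^{(2)}$ of \eqref{eq:FrobxP}: in $f_i=(\tilde x_P)^{qi}\,\tilde x_P^{(2)}/(\tilde x_P^{(1)})^{i}$ the poles introduced by the denominator at the conjugate points $P_{(a^{q^2},b^{q^2},\xi c^{q^2})}$ and $P_{(a^{q^4},b^{q^4},\xi c^{q^4})}$ are cancelled by zeros of $(\tilde x_P)^{qi}$ and of $\tilde x_P^{(2)}$ respectively, so $f_i$ is regular outside $P_\infty$, and then $f_i/\tilde z_P^{iq-i+1}$ has pole divisor exactly $N_iP$. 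This cancellation at points other than $P$ and $P_\infty$ is precisely what a monomial ansatz in functions regular outside $P_\infty$ cannot reproduce.

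A secondary gap: your step 2 is not ``a finite, if lengthy, combinatorial verification,'' because the semigroup depends on $q$ and the conjecture concerns all prime powers $q$; one needs a count uniform in $q$. The paper achieves this by showing $\langle q^3-q+1,q^3+1\rangle$ is telescopic of genus $q^3(q^3-q)/2$ (Lemma \ref{genusS}) and then exhibiting explicit pairwise disjoint sets $S_{i,j}\subset T$, $0\le i\le q-1$, $1\le j\le q-1$, consisting of gaps of $\langle q^3-q+1,q^3+1\rangle$, whose cardinalities sum to $q^3(q^3-q)/2-g(\cX)$ (Lemma \ref{lem:sij}); together with $T\subset H(P)$ this pins down $g(T)=g(\cX)$. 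Your Ap\'ery-set/Selmer suggestion could in principle be made uniform in $q$, but it is not carried out, and determining the Ap\'ery set of $T$ with respect to $q^3-q+1$ in closed form would require essentially the same bookkeeping the paper does with the sets $S_{i,j}$.
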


Finally, for $P \not \in \cX(\fqs)$ nothing specific is known about the structure of semigroup $H(P)$. We will completely determine its gap structure, but for now, we finish this section by stating some facts that we will use to achieve this. We start with the following well-known lemma connecting regular differentials (i.e., differential forms having no poles anywhere on $\cX$) and gaps of $H(P)$.

\begin{proposition}{\rm \cite[Corollary 14.2.5]{VS}}\label{prop:holom}
Let $\cX$ be an algebraic curve of genus $g$ defined over $\K$. Let $P$ be a point of $\cX$ and $\omega$ be a regular differential on $\cX$. Then $v_P(\omega)+1$ is a gap at $P$.
\end{proposition}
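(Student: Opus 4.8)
The plan is to argue by contradiction, with the residue theorem as the sole nontrivial input. Let $\omega$ be a regular differential on $\cX$ and set $m:=v_P(\omega)$, which is $\geq 0$ since $\omega$ has no poles. I want to show that $m+1$ is a gap at $P$, i.e. $m+1\in G(P)$. Suppose instead that $m+1\in H(P)$. Then by definition of the Weierstrass semigroup there is a function $f\in\K(\cX)$ whose pole divisor is exactly $(m+1)P$; in particular $v_P(f)=-(m+1)$ while $v_Q(f)\geq 0$ for every point $Q\neq P$.

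The next step is to inspect the differential $f\omega$ valuation by valuation. At $P$ one computes $v_P(f\omega)=v_P(f)+v_P(\omega)=-(m+1)+m=-1$, so $f\omega$ has a simple pole at $P$. At any other point $Q$, since both $f$ and $\omega$ are regular at $Q$, we get $v_Q(f\omega)=v_Q(f)+v_Q(\omega)\geq 0$, so $f\omega$ is regular away from $P$. Hence the residue of $f\omega$ vanishes at every $Q\neq P$, and at $P$ it is the leading coefficient of the local expansion of a pole of order exactly one, which is therefore nonzero.

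This is where the residue theorem enters: on a complete nonsingular curve the sum of the residues of any rational differential is zero. But here that sum reduces to the single nonzero residue at $P$, a contradiction. Therefore $m+1\notin H(P)$, i.e. $m+1=v_P(\omega)+1$ is a gap, as claimed. The result is standard, and I do not expect a genuine obstacle: everything apart from the invocation of the residue theorem is routine valuation bookkeeping, the only point requiring a word of care being that a pole of order exactly one has nonzero residue (true in every characteristic, unlike for higher-order poles).

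As an alternative that avoids residues, one can argue through Riemann--Roch. Writing $i(D)=\dim\{\eta : (\eta)\geq D\}$ for the index of speciality, the identity $\ell((m+1)P)-\ell(mP)=1-\bigl(i(mP)-i((m+1)P)\bigr)$ shows that $m+1$ is a gap exactly when $i(mP)>i((m+1)P)$, that is, exactly when some regular differential vanishes to order precisely $m$ at $P$. Since the given $\omega$ satisfies $v_P(\omega)=m$ and is regular elsewhere, it lies in the space defining $i(mP)$ but not in the one defining $i((m+1)P)$, witnessing the strict inequality and hence the gap.
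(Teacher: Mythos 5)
Your proof is correct, but there is nothing in the paper to compare it against: the paper does not prove this proposition, it simply quotes it from the reference \cite[Corollary 14.2.5]{VS} and uses it as a black box. Both of your arguments are valid. The residue-theorem route works in every characteristic: if $m+1$ were a non-gap, a function $f$ with pole divisor exactly $(m+1)P$ makes $f\omega$ a differential whose only pole is simple, and a simple pole forces a nonzero residue (you are right to single this point out --- it is exactly what fails for higher-order poles, e.g.\ $d(t^{-1})=-t^{-2}\,dt$), contradicting the vanishing of the sum of residues on a complete nonsingular curve. Your Riemann--Roch alternative is essentially the standard textbook proof of the duality between gaps at $P$ and vanishing orders of regular differentials at $P$ (and is presumably close to what the cited reference does): $m+1$ is a gap if and only if $i(mP)>i\bigl((m+1)P\bigr)$, and the given $\omega$ witnesses this strict drop. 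The second argument has the mild advantage of not invoking the residue machinery, whose well-definedness in positive characteristic (independence of the local parameter, validity of the residue theorem) is itself a nontrivial theorem that your first argument silently relies on; since the paper works over $\K=\overline{\mathbb{F}}_q$, that is worth being aware of, though both inputs are standard.
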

This proposition has the following, for us very useful, consequence.
\begin{corollary}\label{holom}
For any point $P$ on the GK curve $\cX$ distinct from $P_\infty$ and for any $f \in L((2g(\cX)-2)P_\infty),$ we have $v_P(f)+1 \in \mathbb{N} \backslash H(P).$
\end{corollary}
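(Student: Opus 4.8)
The plan is to reduce the statement to Proposition~\ref{prop:holom} by producing a single regular differential whose divisor is concentrated at $P_\infty$. The key step is to exhibit a regular differential $\omega_0$ on $\cX$ with $(\omega_0)_\cX=(2g(\cX)-2)P_\infty$; equivalently, to show that $(2g(\cX)-2)P_\infty$ is a canonical divisor. Granting this, fix a nonzero $f\in L((2g(\cX)-2)P_\infty)$. By definition $(f)_\cX+(2g(\cX)-2)P_\infty\geq 0$, so $(f\omega_0)_\cX=(f)_\cX+(\omega_0)_\cX=(f)_\cX+(2g(\cX)-2)P_\infty\geq 0$, and hence $f\omega_0$ is again a regular differential. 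Proposition~\ref{prop:holom} applied to $\omega=f\omega_0$ then shows that $v_P(f\omega_0)+1$ is a gap at $P$.

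It remains to rewrite $v_P(f\omega_0)$ in terms of $f$. Since $P\neq P_\infty$ and the divisor of $\omega_0$ is supported only at $P_\infty$, we have $v_P(\omega_0)=0$, whence $v_P(f\omega_0)=v_P(f)+v_P(\omega_0)=v_P(f)$. Therefore $v_P(f)+1$ is a gap at $P$, that is, $v_P(f)+1\in\mathbb{N}\setminus H(P)$, which is exactly the assertion. Note that $v_P(f)\leq 2g(\cX)-2$ holds automatically because $(f\omega_0)_\cX$ is effective of degree $2g(\cX)-2$, consistent with all gaps lying in $\{1,\dots,2g(\cX)-1\}$.

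The main obstacle is the existence of $\omega_0$, equivalently the fact that $(2g(\cX)-2)P_\infty$ is canonical. I would deduce this from the symmetry of the Weierstrass semigroup $H(P_\infty)=\langle q^3-q^2+q,\,q^3,\,q^3+1\rangle$, using the classical equivalence that $H(P)$ is symmetric if and only if $(2g-2)P$ is a canonical divisor. To verify symmetry I would either check directly that the numerical semigroup generated by $q^3-q^2+q$, $q^3$ and $q^3+1$ has largest gap equal to $2g(\cX)-1$ (so that $h\in H(P_\infty)\iff 2g(\cX)-1-h\notin H(P_\infty)$), or invoke the general principle that a rational point of a maximal curve carries a symmetric Weierstrass semigroup; here $P_\infty\in\cX(\fqq)\subseteq\cX(\fqs)$ and $\cX$ is $\fqs$-maximal. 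Once symmetry is established, the count of nongaps in $\{0,1,\dots,2g(\cX)-2\}$ equals $g(\cX)$, so $\dim L((2g(\cX)-2)P_\infty)=g(\cX)$; Riemann--Roch then gives $\dim L(K-(2g(\cX)-2)P_\infty)=1$, and since $\deg(K-(2g(\cX)-2)P_\infty)=0$ this forces $K-(2g(\cX)-2)P_\infty$ to be principal. Hence $(2g(\cX)-2)P_\infty\sim K$ is canonical and yields the required $\omega_0$.
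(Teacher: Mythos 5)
Your overall skeleton is sound, and it differs from the paper's at exactly the one step that matters. Both arguments reduce the corollary to the fact that $(2g(\cX)-2)P_\infty$ is a canonical divisor; your handling of everything downstream of that fact (regularity of $f\omega_0$, $v_P(f\omega_0)=v_P(f)$ because $P\neq P_\infty$ and $\omega_0$ is supported at $P_\infty$, then Proposition \ref{prop:holom}) is correct. The paper, however, proves the key fact constructively: it computes $(dy)_\cH=(q^2-q-2)Q_\infty$, uses that the degree-$(q^2-q+1)$ cover $\cX\to\cH$ is tamely and totally ramified exactly over $\cH(\fqq)$ to obtain $(dy)_\cX$, and then from $z^{q^2-q+1}=y^{q^2}-y$ gets $(dz)_\cX=(-dy/z^{q^2-q})_\cX=(q^5-2q^3+q^2-2)P_\infty$, so that one may simply take $\omega_0=dz$. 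You instead want $\omega_0$ to exist abstractly, deduced from symmetry of $H(P_\infty)$ via Riemann--Roch; the equivalence you invoke (symmetric semigroup if and only if $(2g-2)P$ is canonical) is indeed classical, and your use of it is correct.

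The gap is that the symmetry of $H(P_\infty)=\langle q^3-q^2+q,\,q^3,\,q^3+1\rangle$ --- the pivot of your whole argument --- is never actually proved. Of the two justifications you offer, the second is false: it is not true that every rational point of a maximal curve carries a symmetric Weierstrass semigroup. For instance, $y^5+y=x^2$ is an $\mathbb{F}_{25}$-maximal curve of genus $2$ (a quotient of the Hermitian curve over $\mathbb{F}_{25}$) with $46$ rational points but, being hyperelliptic in odd characteristic, only $6$ Weierstrass points; at each of the remaining rational points the gap set is $\{1,2\}$, whose largest element is $2<2g-1=3$, so the semigroup there is not symmetric. (Maximality only tells you that Frobenius acts as $-q^3$ on the Jacobian, hence that the class of $K-(2g(\cX)-2)P_\infty$ is $(q^3+1)$-torsion --- not that it vanishes.) Your first route does work, but as written it is a promissory note (``check that the largest gap is $2g(\cX)-1$''), and that check is the actual mathematical content of your proof. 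It can be completed with tools the paper itself uses in Section 3: the sequence $(q^3-q^2+q,\,q^3,\,q^3+1)$ is telescopic, since $d_1=q^3-q^2+q$, $d_2=\gcd(q^3-q^2+q,q^3)=q$, $d_3=1$, and $q^3+1=(q+1)(q^2-q+1)\in\langle q^2-q+1,\,q^2\rangle$; telescopic semigroups are symmetric, and the formula \eqref{gentelescopic} confirms that this semigroup has genus $(q^5-2q^3+q^2)/2=g(\cX)$ (see \cite{KP1995} and \cite[Proposition 5.35]{HVP1998}). With such a lemma added, your proof closes; without it, it is incomplete at its decisive step.
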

\begin{proof}
First note that $(dy)_\cH=(q^2-q-2)Q_\infty$. The set of points that ramify in the covering of $\cX$ by $\cH$ is exactly $\cH(\fqq)$, the set of $\fqq$-rational points of the Hermitian curve, all with ramification index $q^2-q+1$. Moreover, the points of $\cX$ above $\cH(\fqq)$ are precisely the $\fqq$-rational points of $\cX$.  Therefore, we immediately obtain that
$$(dy)_\cX=(q^4-2q^3+q^2-2)P_\infty+(q^2-q)\sum_{P\in\cX(\fqq),P\ne P_\infty} P.$$
Thus, from $z^{q^2-q+1}=y^{q^2}-y$ and equation \eqref{eq:divz},
$$(dz)_\cX=(-dy/z^{q^2-q})_\cX=(q^5-2q^3+q^2-2)P_\infty.$$
In particular a differential $fdz$ is regular if and only if $f \in L((q^5-2q^3+q^2-2)P_\infty)=L((2g(\cX)-2)P_\infty)$.
The corollary now follows by applying Proposition \ref{prop:holom}.
\end{proof}

\section{The Weierstrass semigroup $H(P)$ for $P \in \cX(\mathbb{F}_{q^6}) \setminus \cX(\mathbb{F}_{q^2})$}

This section is devoted to the proof of Conjecture \ref{conjD} for any prime power $q$. In particular in this section $P=P_{(a,b,c)}$ will always denote a point in $\cX(\mathbb{F}_{q^6}) \setminus \cX(\mathbb{F}_{q^2})$. Further we define the semigroup $$T:=\langle q^3-q+1,q^3+1, q^3+i(q^4-q^3-q^2+q-1) \mid i=0,\ldots,q-1\rangle.$$ Conjecture \ref{conjD} then simply states that $H(P)=T$.
Our proof of the conjecture consists of two main steps. In the first step, we will show that $T \subset H(P)$ by showing that the generators of $T$ are in $H(P)$. In the second step, we show that the number of gaps of the semigroup $T$ (also known as the genus of $T$) is exactly equal to the genus of $\cX$. Once this has been established, the equality $H(P)=T$ will follow immediately, proving Conjecture $\ref{conjD}$.

\subsection{$T \subset H(P)$}

As before we use the function $\tilde x_P$ defined in equation \eqref{tilda} and its divisor in equation \eqref{eq:divxpX}. Moreover, for $k \in \mathbb{Z}$, we define the $k$-th Frobenius twist of $\tilde x_P$ as the follows:
\begin{equation}\label{eq:FrobxP}
\tilde x_P^{(k)}:=-a^{q^{2k+1}}-x+b^{q^{2k+1}}y \ \makebox{for} \ P=P_{(a,b,c)}.
\end{equation}
Since we assume that $P \in \cX(\mathbb{F}_{q^6}) \setminus \cX(\mathbb{F}_{q^2}),$ equation \eqref{eq:divxpX} implies that
\begin{align}\label{eq:divFrobxP}
(\tilde x_P^{(1)})_{\cX} & =q\sum_{\xi^{q^2-q+1}=1}P_{(a^{q^2},b^{q^2},\xi c^{q^2})}+\sum_{\xi^{q^2-q+1}=1}P_{(a^{q^4},b^{q^4},\xi c^{q^4})}-(q^3+1)P_{\infty},\notag\\
(\tilde x_P^{(2)})_{\cX} & =q\sum_{\xi^{q^2-q+1}=1}P_{(a^{q^4},b^{q^4},\xi c^{q^4})}+\sum_{\xi^{q^2-q+1}=1}P_{(a,b,\xi c)}-(q^3+1)P_{\infty}.
\end{align}

\begin{lemma} \label{functions}
Let $P=P_{(a,b,c)} \in \cX(\mathbb{F}_{q^6}) \setminus \cX(\mathbb{F}_{q^2})$ and let $\tilde f_i=f_i / \tilde z_P^{iq-i+1}$ where
\begin{equation*} %\label{fi}
f_i:=\frac{(\tilde x_P)^{qi} \cdot \tilde x_P^{(2)}}{(\tilde x_P^{(1)})^{i}}, \ \makebox{for} \ i=1,\dots,q-1.
\end{equation*}
Then $(\tilde f_i)_{\infty}=(q^3+i(q^4-q^3-q^2+q-1))P$ and in particular $q^3+i(q^4-q^3-q^2+q-1) \in H(P)$ for $i=1,\ldots,q-1$.
\end{lemma}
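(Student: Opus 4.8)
The plan is to compute the principal divisor $(\tilde f_i)_\cX$ explicitly and read off its polar part. Since $\tilde f_i = f_i / \tilde z_P^{iq-i+1}$ with $f_i = (\tilde x_P)^{qi}\,\tilde x_P^{(2)} / (\tilde x_P^{(1)})^i$, and since the divisors of all four building blocks $\tilde x_P$, $\tilde x_P^{(1)}$, $\tilde x_P^{(2)}$, $\tilde z_P$ are known from \eqref{eq:divxpX}, \eqref{eq:divFrobxP} and \eqref{eq:divzpX6}, the divisor $(\tilde f_i)_\cX$ is simply the corresponding $\mathbb{Z}$-linear combination of those four divisors. The whole lemma is therefore a bookkeeping computation: assemble the combination, verify that every point other than $P$ cancels out (so that $P$ is the only pole), and confirm that the multiplicity of $P$ in the polar part equals $q^3+i(q^4-q^3-q^2+q-1)$.

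First I would record the four divisors in a common form. From \eqref{eq:divxpX} the support of $(\tilde x_P)_\cX$ consists of the $q^2-q+1$ points $P_{(a,b,\xi c)}$ with coefficient $q$, the $q^2-q+1$ points $P_{(a^{q^2},b^{q^2},\xi c^{q^2})}$ with coefficient $1$, and $P_\infty$ with coefficient $-(q^3+1)$. From \eqref{eq:divFrobxP}, $(\tilde x_P^{(1)})_\cX$ is supported on the $P_{(a^{q^2},b^{q^2},\xi c^{q^2})}$ (coefficient $q$), the $P_{(a^{q^4},b^{q^4},\xi c^{q^4})}$ (coefficient $1$), and $P_\infty$; while $(\tilde x_P^{(2)})_\cX$ is supported on the $P_{(a^{q^4},b^{q^4},\xi c^{q^4})}$ (coefficient $q$), the $P_{(a,b,\xi c)}$ (coefficient $1$), and $P_\infty$. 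Finally, from \eqref{eq:divzpX6}, $(\tilde z_P)_\cX = (q^3+1)(P-P_\infty)$, noting that $P=P_{(a,b,c)}$ is the point with $\xi=1$ among the $P_{(a,b,\xi c)}$.

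Next I would combine these according to the exponents in $\tilde f_i$, namely $qi\cdot(\tilde x_P) + 1\cdot(\tilde x_P^{(2)}) - i\cdot(\tilde x_P^{(1)}) - (iq-i+1)\cdot(\tilde z_P)$. The key point is that the three ``twist orbits'' of size $q^2-q+1$ must cancel: the $P_{(a^{q^2},b^{q^2},\xi c^{q^2})}$-orbit accumulates coefficient $qi\cdot 1 - i\cdot q = 0$, the $P_{(a^{q^4},b^{q^4},\xi c^{q^4})}$-orbit accumulates $1\cdot q - i\cdot 1$ which does \emph{not} vanish in general, so I must be careful here. Let me instead track all contributions to each orbit and verify that the surviving divisor is supported only on $P$ and $P_\infty$. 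The contribution of the orbit $\{P_{(a,b,\xi c)}\}$ from $qi\cdot(\tilde x_P)$ and $1\cdot(\tilde x_P^{(2)})$ is $qi\cdot q + 1$ at each such point, except that the single point $P$ (where $\xi=1$) additionally receives $-(iq-i+1)(q^3+1)$ from the $\tilde z_P$ term. This is precisely the mechanism by which the $\tilde z_P^{iq-i+1}$ denominator is chosen: it converts the zero of $f_i$ along the whole $\xi$-orbit into a pole concentrated at $P$ while cancelling that orbit at the other $q^2-q$ points. I therefore expect the delicate arithmetic to be the verification that the orbit of the $P_{(a^{q^4},b^{q^4},\xi c^{q^4})}$ points cancels and that the coefficient at each non-$P$ point of the $\{P_{(a,b,\xi c)}\}$ orbit vanishes after combining with the contribution coming from $\tilde z_P$—which forces one to check how $\tilde z_P$ behaves at those points, information not directly contained in \eqref{eq:divzpX6} and possibly requiring the more general \eqref{eq:divzpX}.

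The main obstacle, then, is this last cancellation: making sure that all the ``extra'' points $P_{(a,b,\xi c)}$ with $\xi\neq 1$, as well as the entire orbit $\{P_{(a^{q^4},b^{q^4},\xi c^{q^4})}\}$, disappear from $(\tilde f_i)_\cX$, so that $P$ is genuinely the only pole. Once that is secured, the final step is the purely numerical one of computing the multiplicity of the pole at $P$: collecting the coefficient of $P$ from $qi\cdot(\tilde x_P)+1\cdot(\tilde x_P^{(2)})-i\cdot(\tilde x_P^{(1)})-(iq-i+1)\cdot(\tilde z_P)$ and simplifying it to $q^3+i(q^4-q^3-q^2+q-1)$. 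Because $P_\infty$ is not in the support of $\tilde f_i$ (the four $P_\infty$-coefficients, weighted by $qi,1,-i,-(iq-i+1)$ and each a multiple of $q^3+1$, cancel by the very choice of the $\tilde z_P$ exponent), $\tilde f_i$ has $P$ as its unique pole, giving $(\tilde f_i)_\infty = (q^3+i(q^4-q^3-q^2+q-1))P$ and hence $q^3+i(q^4-q^3-q^2+q-1)\in H(P)$ for each $i=1,\dots,q-1$, as claimed.
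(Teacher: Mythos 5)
Your overall strategy---writing $(\tilde f_i)_\cX$ as the $\mathbb{Z}$-linear combination $qi(\tilde x_P)_\cX + (\tilde x_P^{(2)})_\cX - i(\tilde x_P^{(1)})_\cX - (iq-i+1)(\tilde z_P)_\cX$ and reading off the polar part---is exactly the paper's proof. But your plan stalls on a step that is not only unnecessary but false. You insist that the points $P_{(a,b,\xi c)}$ with $\xi \neq 1$ and the whole orbit $\{P_{(a^{q^4},b^{q^4},\xi c^{q^4})}\}$ must ``cancel'' or ``vanish'' from $(\tilde f_i)_\cX$ in order for $P$ to be the unique pole. They do not cancel: the coefficient of each $P_{(a,b,\xi c)}$ coming from $qi(\tilde x_P)_\cX+(\tilde x_P^{(2)})_\cX$ is $iq^2+1$, and since $\tilde z_P$ has valuation zero at these points when $\xi \neq 1$, that coefficient survives unchanged; likewise each $P_{(a^{q^4},b^{q^4},\xi c^{q^4})}$ keeps the coefficient $q-i$, which you yourself noted is nonzero in general. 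The resolution is that nothing needs to vanish: for $1 \le i \le q-1$ both $iq^2+1$ and $q-i$ are strictly positive, so these points are \emph{zeros} of $\tilde f_i$, and zeros away from $P$ are perfectly compatible with the claim $(\tilde f_i)_\infty=(q^3+i(q^4-q^3-q^2+q-1))P$. What the lemma requires is only that every coefficient at a point other than $P$ be non-negative, which is immediate from the bookkeeping you already set up; pursuing the cancellation you describe would simply fail.

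Two further points. First, your worry that \eqref{eq:divzpX6} does not contain enough information about $\tilde z_P$ at the other points, ``possibly requiring the more general \eqref{eq:divzpX}'', is unfounded: \eqref{eq:divzpX6} \emph{is} the complete principal divisor of $\tilde z_P$ (it follows from \eqref{eq:divzpX} precisely because $a^{q^6}=a$, $b^{q^6}=b$, $c^{q^6}=c$ for $P\in\cX(\fqs)$), so $v_Q(\tilde z_P)=0$ at every $Q\notin\{P,P_\infty\}$. Second, you never carry out the computation at $P$ itself, which is the heart of the numerical claim: the coefficient of $P$ in $(\tilde f_i)_\cX$ is $(iq^2+1)-(iq-i+1)(q^3+1) = -(q^3+i(q^4-q^3-q^2+q-1))$. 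Together with the non-negativity of all other coefficients and the cancellation at $P_\infty$ (which you did verify correctly), this identity is what finishes the lemma.
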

\begin{proof}
Using equations \eqref{eq:divxpX} and \eqref{eq:divFrobxP}, we directly obtain that
$$(f_i)_{\cX}=(iq^2+1)\sum_{\xi^{q^2-q+1}=1}P_{(a,b,\xi c)}+(q-i)\sum_{\xi^{q^2-q+1}=1}P_{(a^{q^4},b^{q^4},\xi c^{q^4})}-(q^3+1)(iq-i+1)P_\infty.$$
Now using the divisor of $\tilde z_P$ given in equation \eqref{eq:divzpX6}, we find that
$$(\tilde f_i)_{\cX}=-(q^3+i(q^4-q^3-q^2+q-1))P+(iq^2+1)\sum_{\substack{\xi^{q^2-q+1}=1, \\ \xi \neq 1}}P_{(a,b,\xi c)}+(q-i)\sum_{\xi^{q^2-q+1}=1}P_{(a^{q^4},b^{q^4},\xi c^{q^4})}.$$
The lemma now follows.
\end{proof}

Note that the lemma is also true for $i=0$. Considering the corresponding function $\tilde f_0=\tilde x_P^{(2)}/\tilde z_P$, gives a way to show that $q^3 \in H(P)$. However, this is already known, see equation \eqref{eq:somepoles}.
\begin{proposition}\label{prop:contained}
Let $P \in \cX(\mathbb{F}_{q^6}) \setminus \cX(\mathbb{F}_{q^2})$. Then $T\subset H(P).$
\end{proposition}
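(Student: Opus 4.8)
The goal of Proposition~\ref{prop:contained} is to show that every generator of the semigroup $T$ lies in $H(P)$, from which $T \subset H(P)$ follows immediately since $H(P)$ is closed under addition. The plan is to exhibit, for each generator, an explicit function on $\cX$ whose pole divisor is supported only at $P$ and has the prescribed order. Most of this work is already done in the preceding material, so the proposition is essentially an assembly of facts rather than a fresh computation.

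First I would recall the three families of generators appearing in the definition of $T$, namely $q^3-q+1$, $q^3+1$, and $q^3 + i(q^4-q^3-q^2+q-1)$ for $i = 0,\ldots,q-1$, and match each to a function. For the generators $q^3-q+1$, $q^3$ and $q^3+1$, I would simply cite equation \eqref{eq:somepoles}, which records that these three values already lie in $H(P)$ for any $P \in \cX(\fqs)\setminus\cX(\fqq)$; these come from the functions $\tilde x_P/\tilde z_P$, $\tilde y_P/\tilde z_P$ and $1/\tilde z_P$ respectively, whose divisors were computed from \eqref{eq:divxpX}, \eqref{eq:divypX} and \eqref{eq:divzpX6}. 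For the remaining generators $q^3 + i(q^4-q^3-q^2+q-1)$ with $i = 1,\ldots,q-1$, I would invoke Lemma~\ref{functions}, which produces functions $\tilde f_i$ with $(\tilde f_i)_\infty = (q^3 + i(q^4-q^3-q^2+q-1))P$, so that each such value is a pole number at $P$. The case $i=0$ gives back $q^3$, which is already covered.

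Having verified that every generator of $T$ belongs to $H(P)$, I would conclude by noting that $H(P)$ is a numerical semigroup and hence closed under addition, so the subsemigroup generated by these elements, which is exactly $T$, is contained in $H(P)$. This completes the argument.

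The proof presents essentially no obstacle, since the genuine content has been isolated into Lemma~\ref{functions} and into the earlier divisor computations \eqref{eq:divxpX}--\eqref{eq:divzpX6}. The only point requiring a moment's care is the bookkeeping for the three ``small'' generators: one must make sure that $q^3-q+1$, $q^3$ and $q^3+1$ are all accounted for (the first and third from \eqref{eq:somepoles}, and $q^3$ either from \eqref{eq:somepoles} or as the $i=0$ instance of the Lemma). Beyond this, the statement is an immediate corollary of the constructions already in hand, and the harder half of proving $H(P)=T$ — showing that the genus of $T$ equals $g(\cX)$ — is deferred to the subsequent subsection.
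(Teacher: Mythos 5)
Your proof is correct and follows essentially the same route as the paper: the paper's own proof likewise combines equation \eqref{eq:somepoles} with Lemma \ref{functions} to see that all generators of $T$ lie in $H(P)$, then concludes by closure of $H(P)$ under addition. Your matching of the three small generators to the functions $\tilde x_P/\tilde z_P$, $\tilde y_P/\tilde z_P$ and $1/\tilde z_P$ is also exactly as in the paper.
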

\begin{proof}
Equation \eqref{eq:somepoles} and Lemma \ref{functions} imply that $\{ q^3-q+1,q^3+1, q^3+i(q^4-q^3-q^2+q-1) \mid i=0,\ldots,q-1\} \subset H(P)$. Since by definition these numbers generate $T$, the proposition follows.
\end{proof}

\subsection{The genus of the numerical semigroup $T$ equals $g(\cX)$}

We now show that the genus $g(T)$ of the numerical semigroup $T=\langle q^3-q+1,q^3+1,q^3+i(q^4-q^3-q^2+q-1) \mid i=0,\ldots,q-1\rangle$ is equal to $g(\cX)=(q^5-2q^3+q^2)/2$. In this way, since we already know that $T \subseteq H(P_{(a,b,c)})$ from Proposition \ref{prop:contained}, Conjecture \ref{conjD} will be completely proved.
We recall that a numerical semigroup is called \textit{telescopic} if it is generated by a telescopic sequence, that is by a sequence $(a_1,\ldots,a_k)$ such that
\begin{itemize}
\item $\gcd(a_1, \ldots , a_k)=1$;
\item for each $i=2,\ldots,k$, $a_i/d_i \in \langle a_1/d_{i-1},\ldots, a_{i-1}/d_{i-1}\rangle$, where $d_i=\gcd(a_1,\ldots,a_i)$ and $d_0=0$;
\end{itemize}
see \cite{KP1995}. From \cite[Proposition 5.35]{HVP1998}, the genus of a semigroup $\Gamma$ generated by a telescopic sequence $(a_1,\ldots,a_k)$ is
\begin{equation} \label{gentelescopic}
g(\Gamma)=\frac{1}{2} \bigg( 1+ \sum_{i=1}^k \bigg( \frac{d_{i-1}}{d_i}-1\bigg) a_i \bigg).
\end{equation}
For the semigroup $S$ defined by $S:=\langle q^3-q+1,q^3+1\rangle$ we obtain the following:
\begin{lemma} \label{genusS}
The numerical semigroup $S=\langle q^3-q+1,q^3+1\rangle$ is telescopic. Its genus $g(S)$ is given by
$$g(S)=\frac{q^3(q^3-q)}{2}.$$
\end{lemma}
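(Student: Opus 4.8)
The plan is to exploit the fact that any numerical semigroup generated by two coprime positive integers is automatically telescopic, so that the telescopic genus formula \eqref{gentelescopic} applies and collapses to the classical count of gaps of a symmetric semigroup. Concretely, I set $a_1 = q^3 - q + 1$ and $a_2 = q^3 + 1$ and work directly from the definition of a telescopic sequence recalled just before \eqref{gentelescopic}.

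First I would verify coprimality. Subtracting the generators gives $a_2 - a_1 = q$, so $\gcd(a_1,a_2) = \gcd(q^3 - q + 1, q)$; since $q^3 - q + 1 \equiv 1 \pmod q$, this gcd equals $1$. Hence $\gcd(a_1,a_2) = 1$, which is precisely the first requirement in the definition of a telescopic sequence.

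Next I would check the telescopic condition itself for the sequence $(a_1,a_2)$. In the notation of the definition we have $d_0 = 0$, $d_1 = \gcd(a_1) = a_1$ and $d_2 = \gcd(a_1,a_2) = 1$. The only nontrivial condition is the case $i=2$, which asks that $a_2/d_2 = a_2 \in \langle a_1/d_1\rangle = \langle 1\rangle = \mathbb{N}$; this holds trivially. Thus $(a_1,a_2)$ is a telescopic sequence and $S$ is a telescopic semigroup.

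Finally I would compute the genus by substituting into \eqref{gentelescopic} with $k=2$:
$$g(S) = \frac{1}{2}\left(1 + \left(\frac{d_0}{d_1}-1\right)a_1 + \left(\frac{d_1}{d_2}-1\right)a_2\right) = \frac{1}{2}\big(1 - a_1 + (a_1-1)a_2\big) = \frac{(a_1-1)(a_2-1)}{2}.$$
Substituting $a_1 = q^3 - q + 1$ and $a_2 = q^3 + 1$ then yields $g(S) = \frac{(q^3-q)\,q^3}{2}$, as claimed. I expect no genuine obstacle here: the whole argument amounts to the one-line gcd computation together with an unwinding of the definition, and the final expression is simply the well-known formula $\tfrac{1}{2}(a_1-1)(a_2-1)$ for the number of gaps of a two-generated numerical semigroup.
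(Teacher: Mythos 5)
Your proof is correct and follows essentially the same route as the paper: verify the telescopic condition directly from the definition with $d_1 = a_1$, $d_2 = 1$, then substitute into the genus formula \eqref{gentelescopic} to get $\frac{1}{2}\big(1 - a_1 + (a_1-1)a_2\big) = \frac{q^3(q^3-q)}{2}$. Your added details (the explicit gcd computation via $a_2 - a_1 = q$, and the remark that the result is the classical $\frac{1}{2}(a_1-1)(a_2-1)$ gap count for two coprime generators) are fine refinements of the same argument.
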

\begin{proof}
Let $a_1=q^3-q+1$ and $a_2=q^3+1$. Then $gcd(a_1,a_2)=1$ and, using the same notation as above, $d_1=a_1$ and $d_2=1$. Since $a_2/d_2 \in \langle 1 \rangle= \langle a_1/d_1 \rangle$, $S$ is telescopic.
Thus from equation \eqref{gentelescopic},
$$g(S)=\frac{1}{2} \bigg( 1-a_1+(a_1-1)a_2 \bigg)=\frac{q^3(q^3-q)}{2}.$$
\end{proof}

Now the idea is to compute the number of gaps of $T$ by identifying the elements of $T$ that are gaps of $S$. The following observation is trivial, but will be very useful.
\begin{observation}\label{obs:representationab}
For any integer $n$, there exist unique integers $a$ and $b$ such that $n=a(q^3-q+1)+b(q^3+1)$ and $0 \le b \le q^3-q.$ An integer $n$ is an element of the semigroup $S=\langle q^3-q+1,q^3+1\rangle$ if and only if there exist integers $a$ and $b$ such that $n=a(q^3-q+1)+b(q^3+1)$, $a \ge 0$ and $0 \le b \le q^3-q.$
\end{observation}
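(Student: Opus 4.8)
The plan is to exploit that $a_1 := q^3 - q + 1$ and $a_2 := q^3 + 1$ are coprime, as already recorded in the proof of Lemma \ref{genusS}. The key point is that the interval $\{0, 1, \ldots, q^3 - q\}$ contains exactly $a_1 = q^3 - q + 1$ integers, so it is a complete set of residues modulo $a_1$. Since $\gcd(a_1, a_2) = 1$, the class of $a_2$ is a unit modulo $a_1$, so the map $b \mapsto b a_2 \bmod a_1$ permutes these residues. Hence for any integer $n$ there is a unique $b$ with $0 \le b \le q^3 - q$ and $n \equiv b a_2 \pmod{a_1}$; for this $b$ the number $a := (n - b a_2)/a_1$ is an integer and is the unique integer for which $n = a a_1 + b a_2$. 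This proves the first assertion.

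For the membership criterion, the ``if'' direction is immediate: if $a \ge 0$ and $0 \le b \le q^3 - q$, then $n = a a_1 + b a_2$ is a non-negative integer combination of the generators of $S$, so $n \in S$.

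For the ``only if'' direction, suppose $n \in S$, say $n = \alpha a_1 + \beta a_2$ with $\alpha, \beta \ge 0$. I would reduce the coefficient of $a_2$ into the canonical range by writing $\beta = b + t a_1$ with $0 \le b \le q^3 - q$ and $t = \lfloor \beta / a_1 \rfloor \ge 0$, and then regrouping via the relation $a_1 a_2 = a_2 a_1$:
\[
n = \alpha a_1 + (b + t a_1) a_2 = (\alpha + t a_2) a_1 + b a_2.
\]
Since $\alpha + t a_2 \ge 0$ and $0 \le b \le q^3 - q$, the uniqueness established above forces this to be the canonical representation of $n$; in particular $a = \alpha + t a_2 \ge 0$, as required.

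The only step that calls for any care is this last reduction: one must verify that pushing the $a_2$-coefficient down into $\{0, \ldots, q^3 - q\}$ by trading $a_1$ copies of $a_2$ for $a_2$ copies of $a_1$ never decreases the $a_1$-coefficient. As each such trade strictly increases the coefficient of $a_1$ (by $a_2 > 0$), there is in truth no real obstacle here, and the whole argument rests only on the coprimality of $a_1$ and $a_2$.
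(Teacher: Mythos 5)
Your proof is correct. The paper gives no proof at all for this statement (it is introduced with ``The following observation is trivial, but will be very useful''), and your argument --- uniqueness via the fact that $\{0,\ldots,q^3-q\}$ is a complete residue system modulo $a_1=q^3-q+1$ and that $a_2=q^3+1$ is a unit modulo $a_1$, followed by reducing the $a_2$-coefficient of an arbitrary non-negative representation into that range --- is exactly the standard division-with-remainder reasoning the authors implicitly rely on.
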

In the following lemma, we identify several elements of $T \setminus S$ that turn out to play an important role.
\begin{lemma}\label{lem:sij}
For any $i=0,\ldots,q-1$ and $j=1,\ldots,q-1$, define the set
$$S_{i,j}:=\{(iq-jq^2+k_1)(q^3-q+1)+(jq^2-i+k_2)(q^3+1) \mid k_1=0,\ldots,q-1, \ k_2=0,\ldots,q^3-q-jq^2+i\}.$$
Then we have:
\begin{enumerate}
\item $S_{i,j} \subset T \setminus S.$
\item $S_{i,j} \cap S_{i'j'} = \emptyset$ if $(i',j') \neq (i,j)$, $0 \le i' \le q-1$ and $1 \le j' \le q-1.$
\item $|S_{i,j}|=q(q^3-q-jq^2+i+1).$
\end{enumerate}
\end{lemma}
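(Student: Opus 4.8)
The plan is to prove the three assertions in the order they are stated, treating the first as the crucial one and the other two as routine bookkeeping. Throughout I would work with the representation from Observation \ref{obs:representationab}: each element of $S_{i,j}$ is written as $(iq-jq^2+k_1)(q^3-q+1)+(jq^2-i+k_2)(q^3+1)$, and the first thing to verify is that this is a \emph{legitimate} such representation, i.e.\ that the second coefficient $b=jq^2-i+k_2$ lies in the range $0\le b\le q^3-q$. The lower bound $jq^2-i\ge 0$ holds since $j\ge 1$ and $i\le q-1<q^2$, and the upper bound is exactly what the range $k_2\le q^3-q-jq^2+i$ guarantees. Having the canonical $(a,b)$ in hand, I can read off membership in $S$ directly from the observation: the element lies in $S$ if and only if its $a$-coefficient $iq-jq^2+k_1$ is nonnegative. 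Since $k_1\le q-1$ and $jq^2\ge q^2$, we have $iq-jq^2+k_1\le (q-1)q+(q-1)-q^2=-1<0$, so the $a$-coefficient is always negative. By the uniqueness clause of Observation \ref{obs:representationab} this forces every element of $S_{i,j}$ to lie \emph{outside} $S$, giving $S_{i,j}\cap S=\emptyset$.

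For the other half of assertion (1), that $S_{i,j}\subset T$, I would exhibit each element as a genuine nonnegative combination of the generators of $T$. The natural guess is to trade the negative multiple of $q^3-q+1$ against the generator $g_i:=q^3+i(q^4-q^3-q^2+q-1)$; the key algebraic identity I expect to need is that $g_i$ decomposes, modulo the two base generators, in a way that converts the deficit $jq^2$ in the $q^3+1$-coefficient and the deficit $iq$ in the $q^3-q+1$-coefficient into nonnegative contributions. Concretely I would verify by direct expansion that $q^3+i(q^4-q^3-q^2+q-1)$ equals some explicit combination $\alpha_i(q^3-q+1)+\beta_i(q^3+1)$ with the relevant signs, and then check that replacing $j$ copies of a suitable generator (together with the free generator $q^3+1$ used $k_2$ times and $q^3-q+1$ used $k_1$ times) reproduces the stated element with all coefficients $\ge 0$. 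This is the step I expect to be the main obstacle: the bookkeeping of which generator $g_i$ to use and with what multiplicity, and checking nonnegativity of the resulting coefficients across the full ranges of $k_1,k_2,i,j$, is delicate and is where an off-by-one error would hide.

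Assertion (2), disjointness for distinct $(i,j)$, follows cleanly from the uniqueness in Observation \ref{obs:representationab} once (1) is set up correctly. Every element of $S_{i,j}$ has its canonical representation with $b$-coefficient $b=jq^2-i+k_2$; since $0\le i\le q-1$ and $0\le k_2$ with $k_2$ bounded as above, the pair $(i,j)$ is recoverable from $b$ by reducing modulo $q^2$ and reading off residues. I would make this precise: given the canonical $b$, the value of $i$ is determined as $-b\bmod q^2$ reduced into $\{0,\dots,q-1\}$ (using $i\le q-1<q^2$ so that $-i\bmod q^2=q^2-i$ is unambiguous), and then $j$ is determined by the integer part. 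Hence two elements with the same canonical $b$ and $a$ coming from different $(i,j)$ cannot coincide, which is exactly disjointness.

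Finally, assertion (3) is a counting statement: $S_{i,j}$ is parametrized by $k_1\in\{0,\dots,q-1\}$ and $k_2\in\{0,\dots,q^3-q-jq^2+i\}$, so it has $q\cdot(q^3-q-jq^2+i+1)$ elements \emph{provided} the parametrization is injective, i.e.\ distinct $(k_1,k_2)$ give distinct elements. This injectivity is again immediate from the uniqueness clause of Observation \ref{obs:representationab}: the map $(k_1,k_2)\mapsto (a,b)=(iq-jq^2+k_1,\ jq^2-i+k_2)$ is injective because it is affine with the two coordinates moving independently, and each resulting $(a,b)$ is the unique admissible representation of its value. One should also confirm the count is nonnegative, i.e.\ that $jq^2-i\le q^3-q$ so that the $k_2$-range is nonempty; this holds since $j\le q-1$ gives $jq^2\le q^3-q^2<q^3-q+i$ for the relevant $i$, which I would note explicitly. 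With injectivity established the cardinality is simply the product of the two range sizes, completing the proof.
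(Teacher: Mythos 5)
Your treatment of the two easy claims---that the elements avoid $S$ (via negativity of the $a$-coefficient and Observation \ref{obs:representationab}) and the cardinality count in part (3)---is correct and matches the paper. But your proof of part (2) is genuinely wrong. You claim the pair $(i,j)$ can be recovered from the canonical $b$-coefficient $b=jq^2-i+k_2$ by reduction modulo $q^2$. It cannot: modulo $q^2$ one has $b\equiv k_2-i$, and $k_2$ is a free parameter ranging over an interval of length $q^3-q-jq^2+i+1\ge q^2-q+1$, so the residue of $b$ modulo $q^2$ moves with $k_2$ and does not determine $i$. Concretely, for $q=3$ the element of $S_{0,1}$ with $k_2=1$ and the element of $S_{2,1}$ with $k_2=3$ both have $b=9-0+1=9-2+3=10$, and your recipe gives $-b\bmod q^2=8\notin\{0,1,2\}$, so it does not even return a valid $i$. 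The paper instead reads everything off the \emph{other} coefficient: if two elements coincide, Observation \ref{obs:representationab} forces $iq-jq^2+k_1=i'q-j'q^2+k_1'$ and $jq^2-i+k_2=j'q^2-i'+k_2'$; reducing the first equation modulo $q$ gives $k_1=k_1'$ (both lie in $\{0,\dots,q-1\}$), then reducing modulo $q^2$ gives $i=i'$, hence $j=j'$ and finally $k_2=k_2'$. This works on the $a$-side precisely because the free parameter there, $k_1$, is bounded by $q-1$, whereas $k_2$ is not; your argument has the roles of the two coefficients backwards.

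The other gap is part (1b), the inclusion $S_{i,j}\subset T$, which is the real content of the lemma and which you leave as a promissory note, flagging it as the main obstacle. It is in fact a one-line identity:
$$jq^3+i(q^4-q^3-q^2+q-1)=(iq-jq^2)(q^3-q+1)+(jq^2-i)(q^3+1),$$
so the generic element of $S_{i,j}$ equals
$$(j-1)\,q^3+\bigl(q^3+i(q^4-q^3-q^2+q-1)\bigr)+k_1(q^3-q+1)+k_2(q^3+1),$$
which is a nonnegative integer combination of generators of $T$, since $q^3$ is the $i=0$ generator and $j\ge1$, $k_1,k_2\ge0$. There is no delicate bookkeeping or hidden off-by-one issue. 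Moreover your sketch of the decomposition is off: it uses $j-1$ copies of $q^3$ together with a \emph{single} copy of the $i$-th generator, not ``$j$ copies of a suitable generator''---using $j$ copies of $q^3+i(q^4-q^3-q^2+q-1)$ would produce the coefficient $ji$ rather than $i$.
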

\begin{proof}
First of all note that
\begin{equation*}%\label{eq:reprij}
jq^3+i(q^4-q^3-q^2+q-1)=(-jq^2+iq)(q^3-q+1)+(jq^2-i)(q^3+1).
\end{equation*}
Using this, it is clear from Proposition \ref{prop:contained}, that $(iq-jq^2+k_1)(q^3-q+1)+(jq^2-i+k_2)(q^3+1) \in T$ for any $i,j,k_1,k_2$ in the given range. To show that these elements are not in $S$, observe that
\begin{equation}\label{eq:ijk}
iq-jq^2+k_1 \le (q-1)q-q^2+q-1<0 \ \makebox{and} \ 0 \le jq^2-i+k_2 \le q^3-q.
\end{equation}
Observation \ref{obs:representationab} now implies that $(iq-jq^2+k_1)(q^3-q+1)+(jq^2-i+k_2)(q^3+1) \not \in S.$
This completes the proof of the first item.

Now suppose that $S_{i,j} \cap S_{i'j'} \neq \emptyset$. Then there exist integers $k_1,k_1',k_2,k_2'$ satisfying the defining requirements of $S_{i,j}$ and $S_{i'j'}$ such that
$$(iq-jq^2+k_1)(q^3-q+1)+(jq^2-i+k_2)(q^3+1)=(i'q-j'q^2+k'_1)(q^3-q+1)+(j'q^2-i'+k'_2)(q^3+1).$$
As above, we have equation \eqref{eq:ijk} as well as the similar equation
$$i'q-j'q^2+k'_1 <0 \ \makebox{and} \ 0 \le j'q^2-i'+k'_2 \le q^3-q.$$
Observation \ref{obs:representationab} therefore implies that
$$iq-jq^2+k_1=i'q-j'q^2+k'_1 \ \makebox{and} \ jq^2-i+k_2=j'q^2-i'+k'_2,$$
and in particular $(i-i')q-(j-j')q^2+(k_1-k_1')=0.$ Considering this equation modulo $q$ and modulo $q^2$, we see that $k_1=k_1'$ and $i=i'$, implying that $j=j'$ as well. Then it is also clear that $k_2=k_2'$. This implies the second item.

As for the third item: if
$$(iq-jq^2+k_1)(q^3-q+1)+(jq^2-i+k_2)(q^3+1)=(iq-jq^2+k'_1)(q^3-q+1)+(jq^2-i+k'_2)(q^3+1),$$ with
integers $k_1,k_1',k_2,k_2'$ satisfying the defining requirements of $S_{i,j}$, then the same reasoning as in above proof of the second item, shows that $k_1=k_1'$ and $k_2=k_2'$. Hence the cardinality of $S_{i,j}$ is simply the number of possibilities for $k_1$ times that for $k_2$.
\end{proof}

Picture \ref{fig1} describes the sets $S_{i,j}$ for $q=3$. In this picture a point of coordinates $(a,b)$ is used to represent the element $a(q^3-q+1)+b(q^3+1)$. Black dots represent elements of the numerical semigroup $S$, while white dots represent the elements contained in $S_{i,j}$ for some $i$ and $j$.

\begin{figure}[H]
\label{fig1}
 \centering
  \begin{tikzpicture}[x=2.5cm,y=2.5cm]
    \draw[->] (-2.8,0)--(3,0) node [right] {$a$};
    \draw[->] (0,0)--(0,2.7) node [above] {$b$};
    \draw[-] (-2.8,2.4)--(3,2.4) node [right] {$24$};
    \draw[-] (0,0)--(-2.8,2.4);

\foreach \x in {0,0.1,...,2.8}
      \foreach \y in {0,0.1,...,2.5}
        \node[draw,circle,inner sep=1pt,fill] at (\x,\y) {};

 \node[draw,circle,inner sep=1pt] at (-0.3,0.7) {};
 \node[draw,circle,inner sep=1pt] at (-0.9,0.9) {};
 \node[draw,circle,inner sep=1pt] at (-0.6,0.8) {};
 \node[draw,circle,inner sep=1pt] at (-1.2,1.6) {};
 \node[draw,circle,inner sep=1pt] at (-1.5,1.7) {};
 \node[draw,circle,inner sep=1pt] at (-1.8,1.8) {};

\draw[-] (1.3,2.5) --(1.3,2.5)  node [above, right]{$S$};
\draw[-] (-0.3,0.7)--(-0.3,2.5) node [above, right]{$S_{2,1}$};
\draw[-]  (-0.9,0.9) --(-0.9,2.5)  node [above, right]{$S_{0,1}$};
\draw[-]  (-0.6,0.8) --(-0.6,2.5)  node [above, right]{$S_{1,1}$};
 \draw[-]  (-1.2,1.6) --(-1.2,2.5)  node [above, right]{$S_{2,2}$};
\draw[-]  (-1.5,1.7) --(-1.5,2.5)  node [above, right]{$S_{1,2}$};
\draw[-] (-1.8,1.8) --(-1.8,2.5)  node [above, right]{$S_{0,2}$};

\draw[-]  (-0.3,0.7)--(0,0.7);
\draw[-] (-0.9,0.9) --(-0.6,0.9);
\draw[-] (-0.6,0.8) --(-0.3,0.8);
 \draw[-]  (-1.2,1.6) --(-0.9,1.6);
\draw[-] (-1.5,1.7) --(-1.2,1.7);
\draw[-] (-1.8,1.8) --(-1.5,1.8);
\draw[style=help lines,dashed] (-2.8,2.4) --(-2.8,0) node [below] { \textcolor{black}{$-28$}};

\foreach \x in {-0.1,-0.2,-0.3}
      \foreach \y in {0.7,0.8,...,2.5}
        \node[draw,circle,inner sep=1pt] at (\x,\y) {};

\foreach \x in {-0.4,-0.5,-0.6}
      \foreach \y in {0.8,0.9,...,2.4}
        \node[draw,circle,inner sep=1pt] at (\x,\y) {};

\foreach \x in {-0.7,-0.8,-0.9}
      \foreach \y in {0.9,1,...,2.5}
        \node[draw,circle,inner sep=1pt] at (\x,\y) {};

\foreach \x in {-1,-1.1,-1.2}
      \foreach \y in {1.6,1.7,...,2.4}
        \node[draw,circle,inner sep=1pt] at (\x,\y) {};

\foreach \x in {-1.3,-1.4,-1.5}
      \foreach \y in {1.7,1.8,...,2.5}
        \node[draw,circle,inner sep=1pt] at (\x,\y) {};

\foreach \x in {-1.6,-1.7,-1.8}
      \foreach \y in {1.8,1.9,...,2.4}
        \node[draw,circle,inner sep=1pt] at (\x,\y) {};

\draw[style=help lines,dashed] (-0.3,0.7) --(-0.3,0) node [below] { \textcolor{black}{$-3$}};
\draw[style=help lines,dashed] (-0.6,0.8) --(-0.6,0) node [below] { \textcolor{black}{$-6$}};
\draw[style=help lines,dashed] (-0.9,0.9) --(-0.9,0) node [below] { \textcolor{black}{$-9$}};
\draw[style=help lines,dashed] (-1.2,1.6) --(-1.2,0) node [below] { \textcolor{black}{$-12$}};
\draw[style=help lines,dashed] (-1.5,1.7) --(-1.5,0) node [below] { \textcolor{black}{$-15$}};
\draw[style=help lines,dashed] (-1.8,1.8) --(-1.8,0) node [below] { \textcolor{black}{$-18$}};

\draw[style=help lines,dashed] (-0.3,0.7) --(-2.8,0.7) node [left] {\textcolor{black}{$7$}};
\draw[style=help lines,dashed] (-0.6,0.8) --(-2.8,0.8);
\draw[style=help lines,dashed] (-0.9,0.9) --(-2.8,0.9) node [left] { \textcolor{black}{$9$}};
\draw[style=help lines,dashed] (-1.2,1.6) --(-2.8,1.6) node [left] { \textcolor{black}{$16$}};
\draw[style=help lines,dashed] (-1.5,1.7) --(-2.8,1.7);
\draw[style=help lines,dashed] (-1.8,1.8) --(-2.8,1.8) node [left] { \textcolor{black}{$18$}};

\end{tikzpicture}
\caption{The sets $S_{i,j}$ and $S$ for $q=3$}
  \label{figure}

\begin{tabular}{r@{: }l r@{: }l}
$\circ$ & Elements in $S_{i,j}$ & \textbullet & Elements in $S$
\end{tabular}
\end{figure}

We are now ready to prove Conjecture \ref{conjD}.
\begin{theorem}
We have $g(T)=g(\cX)$ and in particular $H(P)=T.$
\end{theorem}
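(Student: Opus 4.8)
The plan is to prove $g(T)=g(\cX)$ by a careful counting argument that compares $T$ with the simpler telescopic semigroup $S=\langle q^3-q+1,q^3+1\rangle$, whose genus $g(S)=q^3(q^3-q)/2$ is already known from Lemma \ref{genusS}. The key observation is that $S \subseteq T$, so every gap of $T$ is automatically a gap of $S$; equivalently, the set of elements that $T$ contains but $S$ does not is exactly $T \setminus S$, and these are precisely the gaps of $S$ that get ``filled in'' when we pass to $T$. This yields the fundamental relation
\begin{equation*}
g(T) = g(S) - |T \setminus S|.
\end{equation*}
Thus the whole problem reduces to computing the cardinality of $T \setminus S$ and checking that it equals $g(S)-g(\cX) = q^3(q^3-q)/2 - (q^5-2q^3+q^2)/2$.

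The heart of the argument is therefore to show that $T \setminus S$ is exactly the disjoint union $\bigsqcup_{i=0}^{q-1}\bigsqcup_{j=1}^{q-1} S_{i,j}$ of the sets introduced in Lemma \ref{lem:sij}. The inclusion $\bigsqcup S_{i,j} \subseteq T \setminus S$ and the disjointness are already established in parts (1) and (2) of that lemma, and the cardinalities are given in part (3). So the genuinely new work is the reverse inclusion $T \setminus S \subseteq \bigsqcup S_{i,j}$: I must argue that every element of $T$ lying outside $S$ is captured by one of the $S_{i,j}$. Using Observation \ref{obs:representationab}, I would write an arbitrary element of $T$ in its unique normal form $a(q^3-q+1)+b(q^3+1)$ with $0 \le b \le q^3-q$; membership in $T$ but not $S$ forces $a<0$, and I would then track how the extra generators $q^3+i(q^4-q^3-q^2+q-1)$ contribute to such representations, matching the resulting $(a,b)$ coordinates against the ranges defining $S_{i,j}$. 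The picture for $q=3$ in Figure \ref{fig1} makes the geometry transparent: the white dots, organized in the vertical strips $S_{i,j}$, are exactly the lattice points below the semigroup region $S$ that nonetheless lie in $T$.

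Granting the set equality, the computation is then purely arithmetic: summing part (3) of Lemma \ref{lem:sij},
\begin{equation*}
|T \setminus S| = \sum_{i=0}^{q-1}\sum_{j=1}^{q-1} q\,(q^3-q-jq^2+i+1),
\end{equation*}
which evaluates to a polynomial in $q$ that I would verify equals $g(S)-g(\cX)$. Substituting into $g(T)=g(S)-|T\setminus S|$ then gives $g(T)=(q^5-2q^3+q^2)/2=g(\cX)$. Finally, combining this with Proposition \ref{prop:contained}, which gives $T \subseteq H(P)$, and the fact that $|\mathbb{N}\setminus H(P)|=g(\cX)=g(T)=|\mathbb{N}\setminus T|$, forces $H(P)=T$: a numerical semigroup containing $T$ and having the same number of gaps as $T$ must coincide with $T$.

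The main obstacle I anticipate is the reverse inclusion $T \setminus S \subseteq \bigsqcup S_{i,j}$, since it requires showing that the listed sets $S_{i,j}$ account for \emph{all} of $T \setminus S$ and not merely a subset. One must be careful that different ways of expressing an element of $T$ using the generators do not produce elements of $T\setminus S$ outside the prescribed ranges $0 \le k_1 \le q-1$ and $0 \le k_2 \le q^3-q-jq^2+i$; controlling the reductions modulo $q$ and modulo $q^2$ (as already done in the proof of Lemma \ref{lem:sij}(2)) is what keeps the bookkeeping finite and exhaustive. The summation at the end is routine once the combinatorial structure is pinned down.
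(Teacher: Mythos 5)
Your reduction $g(T)=g(S)-|T\setminus S|$ is correct, and your final step (from $g(T)=g(\cX)$ and $T\subseteq H(P)$ to $H(P)=T$) is also fine. But the heart of your plan --- the reverse inclusion $T\setminus S\subseteq\bigsqcup_{i,j}S_{i,j}$ --- is exactly what you do not prove, and it is not the routine bookkeeping you suggest. A general element of $T$ is a sum $\alpha(q^3-q+1)+\beta(q^3+1)+\sum_{i}c_i\bigl(q^3+i(q^4-q^3-q^2+q-1)\bigr)$ in which several of the generators indexed by $i$ may occur, each with arbitrarily large multiplicity $c_i$; after reducing to the normal form of Observation \ref{obs:representationab} you would have to show that every such combination with negative $a$-coordinate lands in one of the finitely many rectangles $S_{i,j}$, whose defining ranges ($k_1\le q-1$, $k_2\le q^3-q-jq^2+i$, $1\le j\le q-1$) are quite restrictive. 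Nothing in Lemma \ref{lem:sij} controls sums of many twisted generators, so this step would require a genuinely new structural argument. (The set equality is in fact true, but the natural way to see it is as a corollary of the theorem, not as an ingredient of its proof.)

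The gap is avoidable, and this is precisely where the paper's argument differs from yours: one never needs the reverse inclusion, because the opposite inequality comes for free from the curve. Proposition \ref{prop:contained} gives $T\subseteq H(P)$, and the Weierstrass gap theorem says $H(P)$ has exactly $g(\cX)$ gaps, so $g(T)\ge g(\cX)$. For the other direction, the forward inclusion and disjointness (items (1) and (2) of Lemma \ref{lem:sij}) already give
$$g(T)\;\le\; g(S)-\sum_{i=0}^{q-1}\sum_{j=1}^{q-1}|S_{i,j}|,$$
and the arithmetic you outline (Lemma \ref{genusS} together with item (3) of Lemma \ref{lem:sij}) shows this right-hand side equals $g(\cX)$. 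The two inequalities sandwich $g(T)=g(\cX)$, and $H(P)=T$ follows as you say. So your counting framework is sound, but as written your proof rests on an unproved (and hard) set equality; replacing ``compute $|T\setminus S|$ exactly'' by ``bound $g(T)$ from both sides'' is what closes it.
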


\begin{proof}
Proposition \ref{prop:contained} implies that $g(T) \ge g(\cX)$. Hence the theorem follows once we show that $g(T) \le g(\cX)$. However, using the first two items of Lemma \ref{lem:sij}, we see that $$g(T) \le g(S)-\sum_{i=0}^{q-1}\sum_{j=1}^{q-1}|S_{i,j}|.$$
Using Lemma \ref{genusS} and item three of Lemma \ref{lem:sij} we obtain
\begin{align*}
g(T) &\leq  \frac{q^6-q^4}{2} -\sum_{i=0}^{q-1} \sum_{j=1}^{q-1} q(q^3-q+1-jq^2+i)\\
  &=  \frac{q^6-q^4}{2} -\sum_{i=0}^{q-1} \sum_{j=1}^{q-1} q(q^3-q+1) +\sum_{i=0}^{q-1} \sum_{j=1}^{q-1}jq^3-\sum_{i=0}^{q-1} \sum_{j=1}^{q-1}iq\\
 &=  \frac{q^6-q^4}{2}-q^2(q-1)(q^3-q+1)+\frac{q^5(q-1)}{2}-\frac{q^2(q-1)^2}{2}=\frac{q^5-2q^3+q^2}{2}=g(\cX).
\end{align*}

\end{proof}

A direct consequence of the above theorem is that $H(P)=\left(\bigcup_{i,j}S_{i,j} \right) \cup S$. It is not hard to obtain more information about $H(P)$ from the above calculations. For example, it is clear that the multiplicity of $H(P)$ (i.e., the smallest positive element in $H(P)$) is equal to $q^3-q+1$, while its conductor (i.e., the largest gap) is $2g(\cX)-1$. This means in particular that like $H(P_\infty)$, the semigroup $H(P)$ is symmetric. Since $H(P_\infty)$ has multiplicity $q^3-q^2+q$, we also see that $H(P) \neq H(P_\infty).$ %In particular, since all the elements in $\mathbb{N}$ are then contained in the union of $S$ and the triangle $\Delta$ on the left-hand side of the affine plane, this will show that the set of gaps is given by the elements that correspond to $(a,b) \in \Delta \setminus\bigg(\bigcup_{i,j}S_{i,j} \bigg)$.

\section{The Weierstrass semigroup $H(P)$ for $P \not\in \cX(\mathbb{F}_{q^6})$}

In this section we determine the Weierstrass semigroup $H(P)$ for $P \not\in \cX(\mathbb{F}_{q^6})$. In particular in this section $P=P_{(a,b,c)}$ will always denote a point on $\cX$ not in $\cX(\mathbb{F}_{q^6})$. For future reference, note that as in the previous section, this means that $c \neq 0$. As we will see, the semigroup $H(P)$ is the same for all $P \not\in \cX(\mathbb{F}_{q^6})$ and hence the `generic' semigroup for a point on $\cX$. Our approach is use Corollary \ref{holom} to construct gaps of $H(P)$ by computing the valuation at $P$ of functions $f \in L((2g(\cX)-2)P_\infty).$ It is very easy to find a basis of the Riemann--Roch space $L((2g(\cX)-2)P_\infty)$. For example the functions $x^iy^jz^k$ where $i \ge 0$, $0 \le j \le  q$, $0 \le k \le q^2+q$ and $i(q^3+1)+j(q^3-q^2+q)+kq^3\le 2g(\cX)-2$ form a basis. However, this does not settle the matter, since these basis elements all will have valuation $0$ at $P$. Therefore an effort must be made to construct functions in $L((2g(\cX)-2)P_\infty)$ having distinct valuations at $P$. In the next subsection, we construct functions with various valuations at $P$. After that we will combine these functions and obtain a set $G$ of several explicitly described gaps of $H(P)$ using Corollary \ref{holom}. The remainder of the section will then be a somewhat lengthy calculation showing that the set $G$ in fact contains $g(\cX)$, and hence all, gaps of $H(P)$.

\subsection{Construction of functions.}

We start by constructing a function $g_1$ with small, but positive, valuation at $P=P_{(a,b,c)}$. It will be convenient to define $\beta=b^{q^2}-b$. Note that $b^{q^2}-b=c^{q^2-q+1} \neq 0$, since $P \not \in \cX(\fqs)$ (and therefore a fortiori $P \not \in \cX(\fqq)$). We define
\begin{equation*}
g_1:=(\beta^{q^2-1}-1)\tilde x_P^q+\beta^{q^2+q}+\beta^{q}\left((\tilde y_P-\beta)(\tilde x_P+\beta^q(\tilde y_P-\beta))^{q-1}\right).
\end{equation*}
The functions $\tilde x_P$ and $\tilde y_P$ are as in equation \eqref{tilda}.
This definition may seen ad hoc, but it arises naturally when constructing functions of low pole order at $P_\infty$ and large vanishing order at $P$. More precisely, we have the following lemma.

\begin{lemma}\label{lem:g1}
The function $g_1$ is an element of $L((2g(\cX)-2)P_\infty)$. Moreover $v_{P_\infty}(g_1)\ge-q(q^3+1)$ and $v_P(g_1)=q^2+1$.
\end{lemma}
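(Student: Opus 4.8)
The plan is to verify the claimed membership and valuation by direct computation using the divisors recorded in equations \eqref{eq:divxpX}, \eqref{eq:divypX} and \eqref{eq:divzpX6}, together with the fact that $P\not\in\cX(\fqs)$. First I would record the pole behaviour. Each of $\tilde x_P$ and $\tilde y_P$ has its only pole at $P_\infty$, of orders $q^3+1$ and $q^3-q^2+q$ respectively, by \eqref{eq:divxpX} and \eqref{eq:divypX}. Reading off the three summands of $g_1$, the dominant pole order at $P_\infty$ comes from $\tilde x_P^q$ (order $q(q^3+1)$) and from $\tilde x_P\cdot(\tilde x_P)^{q-1}$ inside the last term (also order $q(q^3+1)$); every other monomial has strictly smaller pole order. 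Hence $v_{P_\infty}(g_1)\ge -q(q^3+1)$. Since $q(q^3+1)=q^4+q\le q^5-2q^3+q^2-2=2g(\cX)-2$ for every prime power $q$ (a one-line inequality I would verify separately), this immediately gives $g_1\in L((2g(\cX)-2)P_\infty)$, establishing the first two assertions of the lemma.

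The substantive claim is $v_P(g_1)=q^2+1$, and this is where the delicate part lies. The strategy is to compute $v_P$ of $g_1$ by expanding it in a local parameter at $P$. I would use the local structure of $\cX$ at $P=P_{(a,b,c)}$: since $c\neq0$ and $P\not\in\cX(\fqq)$, the function $\tilde z_P$ vanishes to order $q^3+1$ at $P$ by \eqref{eq:divzpX6}, while $\tilde x_P$ and $\tilde y_P$ vanish to low order. From \eqref{eq:divxpX} one reads $v_P(\tilde x_P)=q$ (the coefficient of $P_{(a,b,c)}$ in that divisor), and from \eqref{eq:divypX} one reads $v_P(\tilde y_P)=1$, since $P$ appears once in the sum defining $(\tilde y_P)_\cX$ (taking $s=0$). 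Thus near $P$ we have $v_P(\tilde x_P)=q$ and $v_P(\tilde y_P)=1$, with $\tilde y_P$ serving as a local uniformizer.

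With these valuations in hand I would analyse the three terms of $g_1$ separately and track the leading coefficient, which is the crux. The term $(\beta^{q^2-1}-1)\tilde x_P^q$ has valuation $q\cdot q=q^2$. In the last term, writing $u:=\tilde x_P+\beta^q(\tilde y_P-\beta)$, the factor $\tilde y_P-\beta$ does not vanish at $P$ (its value is $-\beta\neq0$), so $v_P\bigl((\tilde y_P-\beta)u^{q-1}\bigr)=(q-1)\,v_P(u)$; the point of the particular combination is that $u$ has $v_P(u)=q$ as well, with leading coefficient matching that of $\tilde x_P$, so this term also contributes valuation $q^2$. The content of the definition of $g_1$ is precisely that the valuation-$q^2$ leading terms of these two pieces cancel, together with the constant $\beta^{q^2+q}$ being arranged to kill lower-order obstructions, leaving a nonzero term of valuation exactly $q^2+1=q^2+v_P(\tilde y_P)$. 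The main obstacle, and the part requiring genuine care rather than bookkeeping, is establishing this cancellation explicitly: one must expand $\tilde x_P$, $\tilde y_P$ and the Hermitian relation $y^{q+1}=x^q+x$ to second order in the uniformizer $\tilde y_P$ at $P$, compute the order-$q^2$ coefficients of both the $\tilde x_P^q$ term and the $u^{q-1}(\tilde y_P-\beta)$ term, check that the prefactors $(\beta^{q^2-1}-1)$ and $\beta^q$ are exactly those that force the order-$q^2$ coefficients to agree, and then confirm that the resulting order-$(q^2+1)$ coefficient is nonzero (which uses $\beta\neq0$). Once the leading coefficient at order $q^2+1$ is shown nonzero, $v_P(g_1)=q^2+1$ follows and the lemma is proved.
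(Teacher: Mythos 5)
Your treatment of the pole at $P_\infty$ is correct and is exactly the paper's argument (triangle inequality, with $\tilde x_P^q$ dominant); making the inequality $q^4+q\le 2g(\cX)-2$ explicit is even a small improvement, since the paper leaves it implicit. Two slips there are harmless: the outer factor of the last summand is $\tilde y_P-\beta$, not $\tilde x_P$, so its pole order is $(q^3-q^2+q)+(q-1)(q^3+1)<q(q^3+1)$; and \eqref{eq:divzpX6} applies only to $\fqs$-rational points, so for the $P$ of this section $v_P(\tilde z_P)=q^3$ by \eqref{eq:divzpX} (irrelevant here, as $\tilde z_P$ does not occur in $g_1$). The genuine gap is at $P$: you build the whole cancellation mechanism on the claim that $u:=\tilde x_P+\beta^q(\tilde y_P-\beta)$ satisfies $v_P(u)=q$ ``with leading coefficient matching that of $\tilde x_P$''. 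This is false. Since $\tilde x_P(P)=\tilde y_P(P)=0$, evaluation gives $u(P)=-\beta^{q+1}\neq 0$, hence $v_P(u)=0$, and the third summand $\beta^q(\tilde y_P-\beta)u^{q-1}$ is a \emph{unit} at $P$, not a function of valuation $q^2$. (Your sketch is also internally inconsistent: even granting $v_P(u)=q$, your own product formula would give $(q-1)q=q^2-q$, not $q^2$.) So there are no two valuation-$q^2$ pieces whose leading terms cancel. What actually happens is different: the constant $\beta^{q^2+q}$ cancels the nonzero value $-\beta^{q^2+q}$ of the third summand at $P$, and one must then prove the much stronger fact that the local expansion of the third summand has \emph{no} terms of orders $1,\dots,q^2-1$, so that its order-$q^2$ coefficient can cancel against $(\beta^{q^2-1}-1)\beta^q$, the order-$q^2$ coefficient of the first summand, leaving a term of order $q^2+1$.

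Establishing that absence of intermediate terms is the entire content of the lemma, and it is precisely the step your proposal defers (``one must expand\dots''). The paper does it by deriving $\tilde x_P^q+\tilde x_P=\beta\tilde y_P^q-\tilde y_P^{q+1}$ from the Hermitian relation, iterating to obtain the expansion \eqref{eq:powerx} of $\tilde x_P$ up to order $q^2+q$, and, crucially, recognizing the factorization $u=(\tilde y_P-\beta)^q\bigl(\tilde y_P^{q^2}-\tilde y_P+\beta\bigr)+\cdots$ (error of order $q^3$), which collapses $(\tilde y_P-\beta)u^{q-1}$ into $(\tilde y_P-\beta)^{q^2-q+1}\bigl(\tilde y_P^{q^2}-(\tilde y_P-\beta)\bigr)^{q-1}+\cdots$, an expression whose coefficients can be read off as in \eqref{eq:powerw}. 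Your plan contains no substitute for this factorization, and your suggestion to expand ``to second order in the uniformizer'' cannot suffice: one needs control of every coefficient up to order $q^2+1$, and the cancellation is sensitive to exact characteristic-$p$ signs (for instance $\binom{q-1}{1}\equiv -1\pmod{p}$ enters the order-$q^2$ coefficient of the third summand), so this is genuinely not routine bookkeeping. As it stands, the proposal starts from a false valuation for $u$ and leaves the decisive computation unperformed, so it does not establish $v_P(g_1)=q^2+1$.
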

\begin{proof}
It is clear that $g_1$ only can have a pole at $P_\infty$. Moreover, from equations \eqref{eq:divxpX} and \eqref{eq:divypX} imply that $\tilde x_P$ (resp. $\tilde y_P$) has a pole at $P_\infty$ of order $q^3+1$ (resp. $q^3-q^2+q$). Therefore, the triangle inequality implies that $v_{P_\infty}(g_1) \ge v_{P_\infty}(\tilde x_P^q)=-q(q^3+1),$ which is what we want to show. %Note that equality does not have to hold. The coefficient of $x_P^q$, $\beta^{q^2-1}-1$, may be $0$, since we are in the generic case now.

From equation \eqref{eq:divypX}, we see that the function $\tilde y_P$ is a local parameter for the point $P=P_{(a,b,c)}$. The defining equation for $\mathcal H_q$ directly implies that $\tilde x_P^q+\tilde x_P=\beta \tilde y_P^q-\tilde y_P^{q+1}$. Hence we easily can obtain the power series development of $\tilde x_P$ in terms of $\tilde y_P$. More precisely, we obtain that
\begin{align}\label{eq:powerx}
\tilde x_P & = \beta \tilde y_P^q-\tilde y_P^{q+1}-\tilde x_P^q=\beta \tilde y_P^q-\tilde y_P^{q+1}-\beta^q \tilde y_P^{q^2}+\tilde y_P^{q^2+q}+\cdots \notag\\
  & = (\tilde y_P-\beta)(-\tilde y_P^q+(\tilde y_P-\beta)^{q-1}\tilde y_P^{q^2})+\cdots
\end{align}
Using this, we also obtain that
\begin{align}\label{eq:powerw}
(\tilde y_P-\beta)\left(\tilde x_P+\beta^q(\tilde y_P-\beta)\right)^{q-1}&=(\tilde y_P-\beta)\left((\tilde y_P-\beta)(-\tilde y_P^q+(\tilde y_P-\beta)^{q-1}\tilde y_P^{q^2}) + \beta^q(\tilde y_P-\beta) \right)^{q-1}+\cdots\notag\\
& = (\tilde y_P-\beta)^q\left( -(\tilde y_P-\beta)^q+(\tilde y_P-\beta)^{q-1}\tilde y_P^{q^2}\right)^{q-1}+\cdots\notag\\
&=(\tilde y_P-\beta)^{q^2-q+1}\left( -(\tilde y_P-\beta)+\tilde y_P^{q^2}\right)^{q-1}+\cdots\notag\\
&=(\tilde y_P-\beta)^{q^2}-(\tilde y_P-\beta)^{q^2-1}\tilde y_P^{q^2}+\cdots\notag\\
&=-\beta^{q^2}+(1-\beta^{q^2-1})\tilde y_P^{q^2}+\beta^{q^2-2}\tilde y_P^{q^2+1}+\cdots.
\end{align}
Combining equations \eqref{eq:powerx} and \eqref{eq:powerw}, we see that
\begin{align*}
g_1&=(\beta^{q^2-1}-1)\beta^q \tilde y_P^{q^2}+\beta^{q^2+q}+\beta^q(-\beta^{q^2}+(1-\beta^{q^2-1})\tilde y_P^{q^2}+\beta^{q^2-2}\tilde y_P^{q^2+1}) + \cdots\\
&=\beta^{q^2+q-2}\tilde y_P^{q^2+1}+\cdots
\end{align*}
This implies that $v_P(g_1)=q^2+1$, which is what we wanted to show.
\end{proof}

The next functions are inspired by the previous section in the sense that we again use the functions $\tilde x_P^{(k)}$ introduced in equation \eqref{eq:FrobxP}, but now for $P=P_{(a,b,c)} \not\in \cX(\fqs)$. For $s=1,\ldots,q-2$ we define
$$h_s:=\left( \frac{\tilde x_P^q}{\tilde x_P^{(1)}} \right)^{s+1} \cdot \tilde x_P^{(2)}.$$
We have the following lemma about these functions.
\begin{lemma}\label{lem:hs}
Let $s=1,\dots,q-2$. The function $h_s$ is an element of $L((2g(\cX)-2)P_\infty)$. Moreover $v_{P_\infty}(h_s)=-(q(s+1)-s)(q^3+1)$ and $v_P(h_s)=(s+1)q^2$.
\end{lemma}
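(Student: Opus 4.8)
The plan is to compute the full principal divisor $(h_s)_\cX$ from the divisor of $\tilde x_P$ and those of its Frobenius twists, and then to read off both valuations as coefficients of this divisor. Writing $h_s=(\tilde x_P^q/\tilde x_P^{(1)})^{s+1}\cdot \tilde x_P^{(2)}$, we have $(h_s)_\cX=(s+1)\big(q(\tilde x_P)_\cX-(\tilde x_P^{(1)})_\cX\big)+(\tilde x_P^{(2)})_\cX$. First I would record the divisors of the twists: since $\tilde x_P^{(k)}$ as defined in \eqref{eq:FrobxP} is precisely $\tilde x_{P'}$ for $P'=P_{(a^{q^{2k}},b^{q^{2k}},c^{q^{2k}})}$, applying \eqref{eq:divxpX} to $P'$ yields the divisor of $\tilde x_P^{(k)}$ for each $k$. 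The essential difference with the previous section is that, because $P\not\in\cX(\fqs)$, the support points occurring for $k=2$ involve $c^{q^6}$ rather than $c$.

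Second, I would carry out the cancellation. In the combination $q(\tilde x_P)_\cX-(\tilde x_P^{(1)})_\cX$ the terms supported on the points $P_{(a^{q^2},b^{q^2},\xi c^{q^2})}$ cancel, and after adding $(\tilde x_P^{(2)})_\cX$ one is left with a divisor whose finite part is supported on the points $P_{(a,b,\xi c)}$, $P_{(a^{q^4},b^{q^4},\xi c^{q^4})}$ and $P_{(a^{q^6},b^{q^6},\xi c^{q^6})}$ with respective coefficients $(s+1)q^2$, $q-(s+1)$ and $1$, together with the term $-(q(s+1)-s)(q^3+1)P_\infty$. The pole order at $P_\infty$ is then immediate, giving $v_{P_\infty}(h_s)=-(q(s+1)-s)(q^3+1)$. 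Since $1\le s\le q-2$, all three finite coefficients are nonnegative, so $P_\infty$ is the only pole; and a short inequality check shows $(q(s+1)-s)(q^3+1)\le 2g(\cX)-2$, whence $h_s\in L((2g(\cX)-2)P_\infty)$ as required.

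The valuation $v_P(h_s)$ is the coefficient of $P=P_{(a,b,c)}$ in this divisor. The point $P$ certainly occurs in the first sum (take $\xi=1$, using $c\neq 0$), contributing $(s+1)q^2$; the claim amounts to showing that it occurs nowhere else. That $P$ does not appear among the $P_{(a^{q^4},b^{q^4},\xi c^{q^4})}$ is easy: a coincidence would force $b=b^{q^4}$, whereas $b\notin\mathbb{F}_{q^4}$, since $b\in\mathbb{F}_{q^4}$ together with $b^{q^2}-b=c^{q^2-q+1}$ would give $b\in\fqq$ and hence $c=0$. The main obstacle is to exclude that $P$ lies in the third sum $\sum_\xi P_{(a^{q^6},b^{q^6},\xi c^{q^6})}$: a coincidence here would require $a=a^{q^6}$ and $b=b^{q^6}$, and the point is to argue that, on $\cX$, the hypothesis $P\not\in\cX(\fqs)$ forbids this configuration. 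This is the one step that is not a formal divisor manipulation, and it is where $P\not\in\cX(\fqs)$ must genuinely be used; I expect controlling this coincidence (equivalently, showing the coefficient of $P$ is exactly $(s+1)q^2$ and not $(s+1)q^2+1$) to be the crux. Once it is settled, the lemma follows at once from the divisor computed above.
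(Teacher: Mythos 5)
Your divisor bookkeeping is fine (the coefficients $(s+1)q^2$, $q-(s+1)$, $1$ and $-(q(s+1)-s)(q^3+1)$ all check out, and this is essentially the same divisor-based frame as the paper, which computes $v_P$ factor by factor), but as a proof the proposal has genuine gaps — in fact two. First, the step you call ``easy'' is wrong as argued. You reduce the exclusion of $P$ from $\{P_{(a^{q^4},b^{q^4},\xi c^{q^4})}\}$ to the claim $b\notin\mathbb{F}_{q^4}$, justified by ``$b\in\mathbb{F}_{q^4}$ together with $b^{q^2}-b=c^{q^2-q+1}$ would give $b\in\fqq$''. That implication is a non sequitur, and the claim itself is false in general: for $p\ge 5$ take $b\ne 0$ with $b^{q^2}=-b$, so $b\in\mathbb{F}_{q^4}\setminus\fqq$; then $b^{q+1}\in\fqq$, and any root $a$ of $a^q+a=b^{q+1}$ satisfies $a^{q^{2k}}=a-2k\,b^{q+1}$, hence $a\notin\fqs$, so $P_{(a,b,c)}$ is a point of $\cX$ outside $\cX(\fqs)$ with $b\in\mathbb{F}_{q^4}$. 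What the coincidence actually forces is that \emph{both} $a$ and $b$ lie in $\mathbb{F}_{q^4}$, i.e.\ $(a,b)\in\cH(\mathbb{F}_{q^4})$, and the correct exclusion uses the fact, invoked explicitly in the paper, that $\cH(\mathbb{F}_{q^4})=\cH(\fqq)$ (a consequence of $\fqq$-maximality of $\cH$: both sets have $q^3+1$ points); then $b\in\fqq$ gives $c^{q^2-q+1}=b^{q^2}-b=0$, a contradiction. This arithmetic input is absent from your argument and cannot be dispensed with.

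Second, the step you yourself flag as the crux — excluding $P$ from $\{P_{(a^{q^6},b^{q^6},\xi c^{q^6})}\}$ — is left unproved, so the proposal is incomplete precisely where the content of the lemma lies. Note the coincidence is not far-fetched: if $a,b\in\fqs$, then $c^{q^2-q+1}=b^{q^2}-b\in\fqs^*$ forces $c^{q^6}=\eta c$ for some $\eta$ with $\eta^{q^2-q+1}=1$, so $P$ \emph{automatically} equals $P_{(a^{q^6},b^{q^6},\xi c^{q^6})}$ for $\xi=\eta^{-1}$. Hence one must show that $a,b\in\fqs$ is impossible when $P\notin\cX(\fqs)$, and this requires a global fact about $\cX$: every point of $\cX$ lying over a point of $\cH(\fqs)\setminus\cH(\fqq)$ is $\fqs$-rational. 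This follows by counting with data recalled in Section \ref{sec2}: $\cH$ is $\fqq$-maximal, so $|\cH(\fqs)|=q^6+q^5-q^4+1$, whence $\cH(\fqs)\setminus\cH(\fqq)$ has $q^3(q+1)(q^2-1)$ points, each with an unramified fibre of $q^2-q+1$ points of $\cX$; the total, $q^3(q^2-1)(q^3+1)$, equals $|\cX(\fqs)\setminus\cX(\fqq)|=|\mathcal{O}_2|$, so these fibres consist entirely of $\fqs$-rational points. Thus $a,b\in\fqs$ would force $P\in\cX(\fqs)$, a contradiction. With these two repairs your divisor computation does yield the lemma; this is in substance the paper's proof, which handles both exclusions via $\cH(\fqq)=\cH(\mathbb{F}_{q^4})$ together with the hypothesis $P\notin\cX(\fqs)$.
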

\begin{proof}
Using equations \eqref{eq:divxpX} and \eqref{eq:divFrobxP}, we see that $v_{P_\infty}(h_s)=-(q(s+1)-s)(q^3+1)$ and that $h_s$ has no other poles. Further it is well known that $\mathcal H_q(\fqq)=\mathcal H_q(\mathbb{F}_{q^4}).$ Since any point in $\mathcal H_q(\fqq)$ ramifies totally in the cover $\cX \to \cH$, this means that also $\mathcal \cX(\fqq)=\mathcal \cX(\mathbb{F}_{q^4}).$ Therefore $v_P(\tilde x_P^{(2)})=0$, since $P \not \in \cX(\fqs)$. This implies that
$$v_P(h_s)=(s+1)\left(qv_P(\tilde x_P)-v_P(\tilde x_P^{(1)})\right)=(s+1)q^2,$$ as claimed.
\end{proof}
Now we able to determine several gaps of $H(P)$.
\begin{proposition} \label{gaps}
Let $P \not\in \cX(\mathbb{F}_{q^6})$ be a point on $\cX$. Then
\begin{multline}
G:=\{iq^3+j+kq+m(q^2+1)+\sum_{s=1}^{q-2} n_s ((s+1)q^2)+1 \mid i,j,k,m,n_1,\ldots,n_{q-2} \in \mathbb{Z}_{\geq 0}, \ \makebox{and}\\
i(q+1)+jq+k(q+1)+mq(q+1)+\sum_{s=1}^{q-2} n_s ((s+1)q-s)(q+1) \leq (q+1)(q^2-2)\},\notag
\end{multline}
is a set of gaps at $P$.
\end{proposition}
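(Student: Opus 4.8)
The plan is to realize every candidate gap as $v_P(F)+1$ for a single explicitly constructed function $F \in L((2g(\cX)-2)P_\infty)$ and then invoke Corollary \ref{holom}. The available building blocks are exactly the functions whose pole divisor is supported only at $P_\infty$ and whose order at $P$ we already control: $\tilde z_P$, $\tilde x_P$, $\tilde y_P$, together with $g_1$ from Lemma \ref{lem:g1} and the functions $h_s$ from Lemma \ref{lem:hs}. Given nonnegative integers $i,j,k,m,n_1,\dots,n_{q-2}$, I would set
$$F := \tilde z_P^{\,i}\,\tilde y_P^{\,j}\,\tilde x_P^{\,k}\,g_1^{\,m}\prod_{s=1}^{q-2} h_s^{\,n_s}.$$

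First I would record the local orders at $P$. From \eqref{eq:divzpX}, since $P \not\in \cX(\fqs)$ the Frobenius image $P_{(a^{q^6},b^{q^6},c^{q^6})}$ is distinct from $P$, so $v_P(\tilde z_P)=q^3$; from \eqref{eq:divxpX} the only zero of $\tilde x_P$ coinciding with $P$ is the $\xi=1$ term of the first sum (the second sum is ruled out because $\beta\neq 0$), giving $v_P(\tilde x_P)=q$; and from \eqref{eq:divypX} the function $\tilde y_P$ is a local parameter, so $v_P(\tilde y_P)=1$. Combining these with $v_P(g_1)=q^2+1$ and $v_P(h_s)=(s+1)q^2$ from the two lemmas, additivity of valuations gives
$$v_P(F)=iq^3+j+kq+m(q^2+1)+\sum_{s=1}^{q-2}n_s\,(s+1)q^2,$$
which is exactly the quantity appearing in the description of $G$ (the summand $+1$ aside).

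Next I would verify $F \in L((2g(\cX)-2)P_\infty)$. Each factor has poles only at $P_\infty$, hence so does $F$, and it remains to bound the pole order there. Using $v_{P_\infty}(\tilde z_P)=v_{P_\infty}(\tilde x_P)=-(q^3+1)$, $v_{P_\infty}(\tilde y_P)=-(q^3-q^2+q)$, $v_{P_\infty}(g_1)\ge -q(q^3+1)$ and $v_{P_\infty}(h_s)=-(q(s+1)-s)(q^3+1)$, additivity yields
$$-v_{P_\infty}(F)\le i(q^3+1)+j(q^3-q^2+q)+k(q^3+1)+mq(q^3+1)+\sum_{s=1}^{q-2}n_s\,(q(s+1)-s)(q^3+1).$$
The crux is the factorization $q^3+1=(q+1)(q^2-q+1)$, $q^3-q^2+q=q(q^2-q+1)$, and $2g(\cX)-2=q^5-2q^3+q^2-2=(q^3+1)(q^2-2)=(q+1)(q^2-q+1)(q^2-2)$. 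Pulling the common factor $q^2-q+1$ out of both sides, the requirement $-v_{P_\infty}(F)\le 2g(\cX)-2$ becomes precisely
$$i(q+1)+jq+k(q+1)+mq(q+1)+\sum_{s=1}^{q-2}n_s\,((s+1)q-s)(q+1)\le (q+1)(q^2-2),$$
the defining condition of $G$.

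Under that condition $F\in L((2g(\cX)-2)P_\infty)$, so Corollary \ref{holom} shows that $v_P(F)+1$ is a gap at $P$; letting the tuple $(i,j,k,m,n_1,\dots,n_{q-2})$ range over all admissible values exhibits every element of $G$ as a gap. The argument is essentially a disciplined combination of the earlier lemmas, so there is no single hard step; the only points demanding care are confirming that $P\not\in\cX(\fqs)$ forces $v_P(\tilde z_P)=q^3$ and $v_P(\tilde x_P)=q$ (i.e. that the ``extra'' points in the divisors \eqref{eq:divzpX} and \eqref{eq:divxpX} do not collide with $P$), and observing that the pole bound at $P_\infty$ is \emph{equivalent}, not merely sufficient, to the stated inequality after cancelling the positive factor $q^2-q+1$. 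The genuinely substantial work, namely showing that $G$ already accounts for all $g(\cX)$ gaps, lies beyond this proposition.
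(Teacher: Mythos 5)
Your proposal is correct and follows essentially the same route as the paper: the paper's proof uses exactly the same product $f=\tilde z_P^{\,i}\tilde y_P^{\,j}\tilde x_P^{\,k}g_1^{\,m}\prod_s h_s^{\,n_s}$, the same pole-order bookkeeping at $P_\infty$ (reducing the membership condition in $L((2g(\cX)-2)P_\infty)$ to the stated inequality by cancelling $q^2-q+1$), and the same appeal to Corollary \ref{holom}. Your explicit verification that $P\not\in\cX(\mathbb{F}_{q^6})$ forces $v_P(\tilde z_P)=q^3$ and $v_P(\tilde x_P)=q$ is a detail the paper leaves implicit, but it is the right justification.
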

\begin{proof}
Let $i,j,k,m,n_1,\ldots,n_{q-2}$ be nonnegative integers and write $f= \tilde z_P^i\tilde y_P^j\tilde x_P^k  g_1^m \prod_{s=1}^{q-2} h_s^{n_s}$. Equations \eqref{eq:divxpX}, \eqref{eq:divypX}, \eqref{eq:divzpX} combined with Lemmas \ref{lem:g1} and \ref{lem:hs} imply that $f \in L((2g(\cX)-2)P_\infty)$ if
$$i(q^3+1)+j(q^3-q^2+q)+k(q^3+1)+m(q^4+q)+\sum_{s=1}^{q-2} n_s ((s+1)q-s)(q^3+1) \leq q^5-2q^3+q^2-2,$$
which is equivalent to
\begin{equation}\label{eq:inquality1}
i(q+1)+jq+k(q+1)+mq(q+1)+\sum_{s=1}^{q-2} n_s ((s+1)q-s)(q+1) \leq (q+1)(q^2-2).
\end{equation}
On the other hand we have $$v_P(f)=iq^3+j+kq+m(q^2+1)+\sum_{s=1}^{q-2} n_s ((s+1)q^2).$$
Hence the claim follows from Lemma \ref{holom}.
\end{proof}

\begin{observation}\label{obs:largestgapinG}
Inequality \eqref{eq:inquality} implies in particular that $i\leq q^2-2,j \leq q^2+q-3,k \leq q^2-2$, $m \leq q-1$ and $n_s \leq \lfloor (q+1)/(s+1)\rfloor$. This implies directly that the largest gap of $H(P)$ that is contained in $G$ is obtained by putting $i=q^2-2$ and all other remaining variables to $0$. In other words: the largest element in $G$ is $q^5-2q^3+1=2g(\cX)-q^2+1.$
\end{observation}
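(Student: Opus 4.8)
The plan is to read the bounds on the individual variables directly off the nonnegativity of the terms in \eqref{eq:inquality1}, and then to recognize the maximization of an element of $G$ as a knapsack problem whose optimum is obtained by spending the entire ``budget'' $(q+1)(q^2-2)$ on the single most cost-effective variable $i$.

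First I would record the coordinate bounds. Since every summand on the left of \eqref{eq:inquality1} is nonnegative, each term alone is at most the right-hand side $(q+1)(q^2-2)$. From the $i$- and $k$-terms, $i(q+1),k(q+1)\le(q+1)(q^2-2)$ give $i,k\le q^2-2$; from the $m$-term, $mq(q+1)\le(q+1)(q^2-2)$ gives $mq\le q^2-2$ and hence $m\le q-1$; from the $j$-term, $jq\le q^3+q^2-2q-2$ forces $j\le q^2+q-3$; and from the $n_s$-term, $n_s((s+1)q-s)\le q^2-2$. A one-line cross-multiplication shows $(q^2-2)/((s+1)q-s)<(q+1)/(s+1)$, whence $n_s\le\lfloor(q+1)/(s+1)\rfloor$.

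For the main assertion I would let $N:=iq^3+j+kq+m(q^2+1)+\sum_{s=1}^{q-2}n_s((s+1)q^2)+1$ denote a generic element of $G$ and write the constraint \eqref{eq:inquality1} as $C\le(q+1)(q^2-2)$, with $C$ its left-hand side. The key step is the coefficientwise comparison: for each variable the ratio of its coefficient in $N$ to its coefficient in $C$ is at most $q^3/(q+1)$, with equality exactly for $i$. Explicitly one checks $q^3=\tfrac{q^3}{q+1}(q+1)$, $1\le\tfrac{q^3}{q+1}q$, $q\le\tfrac{q^3}{q+1}(q+1)$, $q^2+1\le\tfrac{q^3}{q+1}q(q+1)$, and $(s+1)q^2\le\tfrac{q^3}{q+1}((s+1)q-s)(q+1)$, the last reducing to the elementary inequality $s+1\le(s+1)q^2-sq$. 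Because all variables are nonnegative, multiplying each comparison by the corresponding variable and adding gives $N-1\le\tfrac{q^3}{q+1}C\le\tfrac{q^3}{q+1}(q+1)(q^2-2)=q^3(q^2-2)$, so $N\le q^5-2q^3+1$.

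Finally I would exhibit attainment and identify the constant: the choice $i=q^2-2$, $j=k=m=n_1=\cdots=n_{q-2}=0$ makes $C=(q+1)(q^2-2)$, so it is feasible and yields $N=(q^2-2)q^3+1=q^5-2q^3+1$; hence this is the largest element of $G$. Since $2g(\cX)=q^5-2q^3+q^2$, this equals $2g(\cX)-q^2+1$, as claimed. I do not anticipate a genuine obstacle: the whole content is the knapsack observation that $i$ is far more efficient per unit cost than every other variable and that $i=q^2-2$ alone exactly exhausts the budget, leaving no capacity for the less efficient variables. The only care required is in the routine coefficient comparisons, with the inequality for $n_s$ the most (mildly) delicate of them.
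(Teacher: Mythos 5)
Your proposal is correct and is essentially the argument the paper leaves implicit: the paper asserts the coordinate bounds and the optimality of $i=q^2-2$ as following ``directly,'' and your knapsack-style comparison (each coefficient ratio in $N$ versus the constraint is at most $q^3/(q+1)$, with equality exactly for $i$, plus the feasibility of the extremal tuple) is precisely the justification needed, with all the routine inequalities checked correctly. One small point in your favor: you rightly read the bounds off \eqref{eq:inquality1} rather than \eqref{eq:inquality} as cited in the observation's text --- the stated bound $j \le q^2+q-3$ only makes sense for the unreduced inequality \eqref{eq:inquality1}, so the paper's cross-reference there is a slight misprint that your reading silently corrects.
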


\begin{observation}\label{obs:jsmall}
If $j\ge q$ and the tuple $(i,j,k,m,n_1,\dots,n_{q-2})$ satisfies inequality \eqref{eq:inquality}, then the tuple $(i,j-q,k+1,m,n_1,\dots,n_s)$ will also satisfy inequality \eqref{eq:inquality}. This implies that when calculating the set $G$, we may assume that $j \le q-1$. Moreover, inequality \eqref{eq:inquality1} is equivalent to
\begin{equation*}
i+j+k+mq+\sum_{s=1}^{q-2} n_s ((s+1)q-s) \leq q^2-2+\frac{j}{q+1},
\end{equation*}
which for $j \le q-1$ is equivalent to
\begin{equation}\label{eq:inquality}
i+j+k+mq+\sum_{s=1}^{q-2} n_s ((s+1)q-s) \leq q^2-2,
\end{equation}
since all variables involved are integers.
\end{observation}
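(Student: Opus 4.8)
The plan is to dispatch the three claims by short arithmetic computations, with attention to the two places where actual content (as opposed to bookkeeping) enters. First I would establish the substitution claim. Under the map $(i,j,k,m,n_1,\dots,n_{q-2})\mapsto(i,j-q,k+1,m,n_1,\dots,n_{q-2})$ the left-hand side of \eqref{eq:inquality} changes only through its $j$- and $k$-terms, by $(j-q)-j+(k+1)-k=-(q-1)$, which is $<0$ since $q\ge2$; and $j\ge q$ forces $j-q\ge0$, so the image is again a tuple of nonnegative integers, and it still satisfies \eqref{eq:inquality}. The identical computation applied to \eqref{eq:inquality1} gives a change of $-q^2+(q+1)=-(q^2-q-1)<0$, so the substitution preserves \eqref{eq:inquality1} as well; this is what lets the reduction be read off directly from the definition of $G$, in which the constraint is \eqref{eq:inquality1}.

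The reduction to $j\le q-1$ then rests on the observation—the first point requiring care—that the value $iq^3+j+kq+m(q^2+1)+\sum_{s=1}^{q-2}n_s((s+1)q^2)$ attached to a tuple in the definition of $G$ is \emph{invariant} under the substitution: its $j$-term drops by $q$ while its $kq$-term rises by $q$, for a net change of $0$. Hence each application strictly decreases $j$ while leaving the associated element of $G$ unchanged, so starting from any tuple satisfying \eqref{eq:inquality1} and iterating as long as $j\ge q$ terminates after finitely many steps at a tuple with $j\le q-1$ producing the same element of $G$. Consequently every element of $G$ is realized by a tuple with $j\le q-1$, which is exactly the assertion that we may assume $j\le q-1$. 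I would emphasize that it is the value-invariance that makes this a legitimate reparametrization of $G$ rather than a modification of it.

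It remains to prove the equivalence of the two inequalities for $j\le q-1$. Dividing \eqref{eq:inquality1} by $q+1$—all of its coefficients except the coefficient $q$ of $j$ being divisible by $q+1$—and writing $\tfrac{jq}{q+1}=j-\tfrac{j}{q+1}$ turns it, as an exact equivalence, into $i+j+k+mq+\sum_{s=1}^{q-2}n_s((s+1)q-s)\le q^2-2+\tfrac{j}{q+1}$, the intermediate inequality in the statement. The second, and final, point requiring care is purely one of integrality: for $j\le q-1$ one has $0\le\tfrac{j}{q+1}<1$, while both the left-hand side and $q^2-2$ are integers, so this inequality holds exactly when $i+j+k+mq+\sum_{s=1}^{q-2}n_s((s+1)q-s)\le q^2-2$, which is \eqref{eq:inquality}. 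This settles all three assertions; the argument is entirely elementary, and the only genuine ingredients are the value-invariance underlying the reduction and this last integrality step.
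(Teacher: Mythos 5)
Your proposal is correct and takes essentially the same route as the paper's own (largely inline) justification: the substitution $(i,j,k,m,n_1,\dots,n_{q-2})\mapsto(i,j-q,k+1,m,n_1,\dots,n_{q-2})$, division of \eqref{eq:inquality1} by $q+1$ with $\tfrac{jq}{q+1}=j-\tfrac{j}{q+1}$, and the final integrality step for $0\le j\le q-1$. The only difference is that you spell out two points the paper leaves implicit --- that the substitution also preserves \eqref{eq:inquality1} itself (which is what the definition of $G$ actually requires) and that it leaves the value $iq^3+j+kq+m(q^2+1)+\sum_{s=1}^{q-2}n_s(s+1)q^2$ invariant, so iterating it is a genuine reparametrization of $G$ --- which is a welcome clarification rather than a different argument.
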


\subsection{$|G|=g(\cX)$.}

We now prove that $G$ is exactly the set of gaps $G$ at $P=P_{(a,b,c)} \not\in \cX(\fqs)$, that is $|G|=g(\cX)$. Since we already know that $G$ contains gaps of $H(P)$, it is sufficient to show that $|G| \ge g(\cX)$. This will require a detailed study of the elements of $G$. To this end we consider the following map
$$\varphi: \mathbb{Z}_{\geq 0}^{q+2} \rightarrow \mathbb{Z}_{\geq 0}, \quad {\rm with} \quad \varphi(i,j,k,m,n_1,\ldots,n_{q-2}) = iq^3+j+kq+m(q^2+1)+\sum_{s=1}^{q-2} n_s ((s+1)q^2)+1,
$$
and consider the set
$$\mathcal{G}=\{(i,j,k,m,n_1,\ldots,n_{q-2}) \in \mathbb{Z}_{\geq 0}^{q+2} \mid j \le q-1, \ \makebox{inequality \eqref{eq:inquality} holds}\}.$$
%\begin{multline}\mathcal{G}=\{(i,j,k,m,n_1,\ldots,n_{q-2}) \in \mathbb{Z}_{\geq 0}^{q+2} \mid \\ i(q+1)+jq+k(q+1)+mq(q+1)+\sum_{s=1}^{q-2} n_s ((s+1)q-s)(q+1)\leq (q+1)(q^2-2)\},\notag
%\end{multline}
Then by Observation \ref{obs:jsmall} we have $G=\varphi(\mathcal{G})$. The main difficulty is that $\varphi_{\big | \mathcal{G}}$, the restriction of the map $\varphi$ to $\mathcal{G}$, is not injective. This makes estimating the cardinality of $G$ somewhat tricky. We proceed by studying the image of $\varphi$ on the following three subsets of $\mathcal G$.
\begin{align*}
\mathcal G_1&:=\{(i,0,k,m,0,\dots,0) \in \mathcal G\},\\
\mathcal G_2&:=\{(i,j,k,m,0,\dots,0) \in \mathcal G \mid 1 \le j \le q-1, k \le q-1, j+m \le q-1\}\\
\mathcal G_3&:=\{(i,j,k,0,\dots,0,n_s,0,\dots,0) \in \mathcal G \mid k \le q-1,1 \le s \le q-2,n_s=1,i+k+(s+1)q \ge q^2-1\}.
\end{align*}
Further, we write $G_1=\varphi(\mathcal G_1)$, $G_2=\varphi(\mathcal G_2)$ and $G_3=\varphi(\mathcal G_3)$. We will show that these sets are mutually disjoint and that their cardinalities add up to $|G|$ in a series of lemmas.

\begin{lemma}\label{lem:G1}
Let $\mathcal G_1$ and $G_1=\varphi(\mathcal{G}_1)$ be as above. Then $\varphi$ restricted to $\mathcal G_1$ is injective and
$$|G_1|=\frac12 q^2(q-1)\left( \frac13 q^2+\frac56 q + \frac12\right).$$
\end{lemma}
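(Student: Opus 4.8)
The plan is to exploit the fact that on $\mathcal{G}_1$ the map $\varphi$ takes a particularly simple shape, reduce the cardinality question to a lattice-point count, and then carry out that count with elementary sum formulas. Since every tuple in $\mathcal{G}_1$ has $j=0$ and $n_1=\cdots=n_{q-2}=0$, the defining inequality \eqref{eq:inquality} collapses to $i+k+mq \le q^2-2$, while $\varphi$ restricts to
$$\varphi(i,0,k,m,0,\ldots,0)=iq^3+mq^2+kq+m+1.$$
First I would record the two bounds that this inequality forces, namely $m \le q-1$ (since $mq \le q^2-2 < q^2$) and $k \le q^2-2 < q^2$ (since $i,mq \ge 0$); these are the only facts about $\mathcal{G}_1$ needed in the sequel.

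To prove injectivity, I would treat $N:=\varphi(i,0,k,m,0,\ldots,0)-1=iq^3+mq^2+kq+m$ as a mixed-radix expansion and recover the coordinates one at a time. Reducing modulo $q$ gives $N \equiv m \pmod q$, and because $0 \le m \le q-1$ this pins down $m$ uniquely. Subtracting $m$, dividing by $q$, and subtracting $mq$ leaves $iq^2+k$; since $0 \le k < q^2$, Euclidean division by $q^2$ then recovers $i$ and $k$. Hence $\varphi_{\mid \mathcal{G}_1}$ is injective and $|G_1|=|\mathcal{G}_1|$.

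It remains to count $|\mathcal{G}_1|=\#\{(i,k,m)\in\mathbb{Z}_{\ge0}^3 : i+k+mq \le q^2-2\}$. Fixing $m$ and using that the number of $(i,k) \in \mathbb{Z}_{\ge0}^2$ with $i+k \le M$ equals $\binom{M+2}{2}$, I obtain
$$|\mathcal{G}_1|=\sum_{m=0}^{q-1}\frac{(q^2-1-mq)(q^2-mq)}{2}.$$
Substituting $r=q-m$ turns this into $\tfrac12\sum_{r=1}^q(q^2r^2-qr)$, which evaluates via the standard formulas for $\sum r$ and $\sum r^2$ to $\frac{q^2(q-1)(q+1)(2q+3)}{12}$; rewriting $(q+1)(2q+3)=2q^2+5q+3$ gives exactly the claimed $\tfrac12 q^2(q-1)\bigl(\tfrac13 q^2+\tfrac56 q+\tfrac12\bigr)$.

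The computational steps are routine; the one point that genuinely requires care is the injectivity argument, where the bounds $m \le q-1$ and $k < q^2$ are precisely what make the digit-by-digit recovery valid. The delicate feature is that the variable $m$ appears twice in $N$ (once as the coefficient of $q^2$ and once as the constant term), so one must extract it from the units digit before touching the $q^2$-place; without the bound $m\le q-1$ this reading would be ambiguous.
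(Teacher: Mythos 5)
Your proposal is correct and follows essentially the same route as the paper: both prove injectivity by reading off $m$ modulo $q$ (using $m\le q-1$) and then peeling off the remaining digits via the bound $k<q^2$, and both reduce the count to $\sum_{m=0}^{q-1}\tfrac12(q^2-1-mq)(q^2-mq)$ before evaluating it with elementary summation identities. The only differences are cosmetic: you evaluate the sum via the substitution $r=q-m$ and power-sum formulas, whereas the paper expands into binomial coefficients and uses summation on the upper index.
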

\begin{proof}
If $(i,0,k,m,0,\dots,0) \in \mathcal G_1$, then $\varphi(i,0,k,m,0,\dots,0)=iq^3+kq+m(q^2+1)+1$ and by inequality \eqref{eq:inquality} $i+k+mq \le q^2-2.$ This implies in particular that
$$0 \le m \le q-1 \ \makebox{and} \ 0 \le kq+m(q^2+1) \le (k+mq)q+q-1 \le (q^2-2)q+q-1 < q^3.$$
Now suppose $(i_1,0,k_1,m_1,0,\dots,0),(i_2,0,k_2,m_2,0,\dots,0) \in \mathcal G_1$ and $$i_1q^3+k_1q+m_1(q^2+1)=i_2q^3+k_2q+m_2(q^2+1).$$ Calculating modulo $q$ and using that $0 \le m_1 \le q-1$ and $0 \le m_2 \le q-1$ (see Observation \ref{obs:jsmall}), we see that $m_1=m_2$. Further, since $0 \le k_1q+m_1(q^2+1)<q^3$ and $0 \le k_2q+m_2(q^2+1)<q^3$, we see that $k_1q+m_1(q^2+1)=k_2q+m_2(q^2+1)$ and $i_1q^3=i_2q^3$. Combining these equalities, we see that $(i_1,0,k_1,m_1,0,\dots,0)=(i_2,0,k_2,m_2,0,\dots,0)$, which is what we wanted to show.

Now we compute $|G_1|.$ First of all, from the above we see that $|G_1|=|\mathcal G_1|$. Further we have
\begin{align*}
|\mathcal G_1|&= \sum_{m=0}^{q-1}\sum_{i=0}^{q^2-2-mq}\sum_{k=0}^{q^2-2-mq-i} 1=\sum_{m=0}^{q-1}\sum_{i=0}^{q^2-2-mq} (q^2-1-mq-i)\\
& = \sum_{m=0}^{q-1}\frac{(q^2-1-mq)(q^2-mq)}{2} = \frac{(q^2-1)q^3}{2}+\sum_{m=0}^{q-1}\frac{-2q^3-q^2+q}{2} m+\binom{m+1}{2}q^2\\
& = \frac{(q^2-1)q^3}{2}+\frac{-2q^3-q^2+q}{2}\binom{q}{2}+\binom{q+1}{3}q^2.
\end{align*}
In the last equality we used \textit{summation on the upper index} to evaluate the summation $\sum_m\binom{m+1}{2}$; see \cite[Eqn. (5.10)]{GKP}.
The desired equality for $|G_1|$ now follows.
\end{proof}

\begin{lemma}\label{lem:G2}
Let $\mathcal G_2$ and $G_2=\varphi(\mathcal{G}_2)$ be as above. Then $\varphi$ restricted to $\mathcal G_2$ is injective and
$$|G_2|=\frac12 q^2(q-1)\left( \frac23 q^2-\frac16 q - \frac56\right).$$
\end{lemma}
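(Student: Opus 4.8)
The plan is to mirror the proof of Lemma \ref{lem:G1}, first establishing injectivity of $\varphi$ on $\mathcal G_2$ and then performing the cardinality count, since injectivity gives $|G_2|=|\mathcal G_2|$. For a tuple $(i,j,k,m,0,\dots,0)\in\mathcal G_2$ we have $\varphi(i,j,k,m,0,\dots,0)=iq^3+j+kq+m(q^2+1)+1$. The defining constraints $1\le j\le q-1$ and $k\le q-1$ are designed precisely so that the contribution $j+kq+m(q^2+1)$ can be read off digit-by-digit in base $q$, which is what will force injectivity. Concretely, I would first reduce modulo $q$: since $0\le j\le q-1$, the residue pins down $j$. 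The hypothesis $j+m\le q-1$ together with $1\le j$ guarantees $0\le m\le q-2$, so after removing $j$ and dividing by $q$, reducing modulo $q$ again isolates $k+mq\pmod q=k$ (using $k\le q-1$), and then $m$, and finally $i$. Thus from $i_1q^3+j_1+k_1q+m_1(q^2+1)=i_2q^3+j_2+k_2q+m_2(q^2+1)$ one successively deduces $j_1=j_2$, $k_1=k_2$, $m_1=m_2$, $i_1=i_2$.

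Once injectivity is in hand, the remaining task is to evaluate
\begin{equation*}
|\mathcal G_2|=\sum_{m,j,k,i}1,
\end{equation*}
where the sum ranges over nonnegative $i,j,k,m$ subject to $1\le j\le q-1$, $k\le q-1$, $j+m\le q-1$, and the single linear inequality $i+j+k+mq\le q^2-2$ from \eqref{eq:inquality}. I would carry out the innermost summation over $i$ first (for fixed $j,k,m$ the number of valid $i$ is $q^2-1-j-k-mq$, provided this is nonnegative), then sum over $k$ from $0$ to $q-1$, then over $j$ and $m$ subject to $1\le j$ and $j+m\le q-1$. As in Lemma \ref{lem:G1}, the arithmetic-progression sums can be handled using summation formulas for binomial coefficients such as \cite[Eqn. (5.10)]{GKP}, reducing everything to a closed-form polynomial in $q$ which should simplify to the claimed $\tfrac12 q^2(q-1)\left(\tfrac23 q^2-\tfrac16 q-\tfrac56\right)$.

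The main point requiring care is verifying that the upper limit $q^2-2-j-k-mq$ on $i$ is never forced below $0$ across the full range of $(j,k,m)$ allowed by the constraints $k\le q-1$ and $j+m\le q-1$; if it were, one would have to truncate the summation and the clean closed form would break. Here the design of the constraints helps: since $k\le q-1$ and $j+mq\le q-1+m(q-1)\le\dots$, one checks $j+k+mq\le (q-1)+(q-1)+(q-1)q<q^2-2$ for the relevant ranges, so the bound on $i$ stays nonnegative and no truncation is needed. The only genuine obstacle is therefore bookkeeping: organizing the nested sums so that the interaction of the two coupled constraints ($k\le q-1$ and $j+m\le q-1$) with the linear inequality is handled consistently, and then confirming that the resulting polynomial identity matches the stated expression. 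This is routine but error-prone, so I would verify the final polynomial at a small value such as $q=3$ against a direct enumeration, as illustrated in Picture \ref{fig1}, to catch any miscount.
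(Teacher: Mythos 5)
Your counting plan (sum over $i$ first, then $k$, then over $j,m$ constrained by $1\le j$ and $j+m\le q-1$) is exactly the paper's computation and would go through, but your injectivity argument contains a genuine error. The value $\varphi(i,j,k,m,0,\dots,0)=iq^3+j+kq+m(q^2+1)+1$ does \emph{not} have a clean base-$q$ digit structure, because $m(q^2+1)=mq^2+m$ contributes $m$ to the units place as well as to the $q^2$ place. Reducing modulo $q$ therefore gives the residue $j+m+1\pmod q$, not $j+1$: the residue pins down the \emph{sum} $j+m$ (using $1\le j+m\le q-1$), not $j$ itself. For instance, with $q=3$ the tuples $(i,j,k,m)=(0,1,0,1)$ and $(0,2,0,0)$ both lie in $\mathcal G_2$ and have $\varphi$-values $12$ and $3$, both $\equiv 0 \pmod 3$, although their $j$-entries differ. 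Your next step fails for the same reason: after subtracting $j+1$, the remaining quantity $iq^3+kq+m(q^2+1)$ is not divisible by $q$, so ``dividing by $q$'' is not legitimate. The paper's proof is built precisely around this obstruction: from the congruence mod $q$ it first concludes $j_1+m_1=j_2+m_2$; then, since $0\le j+kq+m(q^2+1)\le (k+mq)q+q-1<q^3$ (using $j+m\le q-1$ and $k+mq\le q^2-2$ from inequality \eqref{eq:inquality}), it separates the $iq^3$ part to get $i_1=i_2$ and $j_1+k_1q+m_1(q^2+1)=j_2+k_2q+m_2(q^2+1)$; subtracting the two relations yields $k_1+m_1q=k_2+m_2q$, and only at this point does a digit argument (using $0\le k_t\le q-1$) give $k_1=k_2$, then $m_1=m_2$, and finally $j_1=j_2$. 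Without this rearrangement your injectivity proof does not close, and injectivity is what the equality $|G_2|=|\mathcal G_2|$, hence the whole count, rests on.

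A secondary slip: in checking that the inner summation bound stays nonnegative you assert $j+k+mq\le (q-1)+(q-1)+(q-1)q<q^2-2$, but $(q-1)+(q-1)+(q-1)q=q^2+q-2$, which is larger than $q^2-2$, so the displayed chain is false. The correct check must use the \emph{coupled} constraint: $j\ge 1$ and $j+m\le q-1$ give $m\le q-1-j$, so $j+mq\le q^2-q-j(q-1)\le q^2-2q+1$ (worst case $j=1$, $m=q-2$), whence $j+k+mq\le q^2-q\le q^2-2$. This is exactly how the paper argues that $k\le q^2-2-j-mq$ is implied by $k\le q-1$, so that no truncation of the sum occurs.
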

\begin{proof}
If $(i,j,k,m,0,\dots,0) \in \mathcal G_2$, then $\varphi(i,j,k,m,0,\dots,0)=iq^3+j+kq+m(q^2+1)+1$ and by definition we have $1 \le j \le q-1$, $1 \le j+m \le q-1$ and $0 \le k \le q-1$. Moreover, inequality \eqref{eq:inquality} gives that $i+j+k+mq \le q^2-2.$
Similarly as in the previous lemma, we obtain that
$$0 \le m \le q-1 \ \makebox{and} \ 0 \le j+kq+m(q^2+1) \le (k+mq)q+q-1 \le (q^2-2)q+q-1 < q^3.$$
Now suppose $(i_1,j_1,k_1,m_1,0,\dots,0),(i_2,j_2,k_2,m_2,0,\dots,0) \in \mathcal G_2$ and $$i_1q^3+j_1+k_1q+m_1(q^2+1)=i_2q^3+j_2+k_2q+m_2(q^2+1).$$ Reasoning exactly as in the previous lemma, we obtain that $j_1+m_1=j_2+m_2$, $j_1+k_1q+m_1(q^2+1)=j_2+k_2q+m_2(q^2+1)$ and $i_1=i_2$. Combining the first two equations, we deduce that $k_1q+m_1q^2=k_2q+m_2q^2$. Since $0\le k_1 \le q-1$ and $0 \le k_2 \le q-1$, we see $k_1=k_2$, which now implies that $(i_1,j_1,k_1,m_1,0,\dots,0)=(i_2,j_2,k_2,m_2,0,\dots,0).$

Now we compute $|G_2|$. First note that $k\le q-1$, but for a given $j$ and $m$, we also have $k \le q^2-2-j-mq$. However, since $j \ge 1$ and $0 \le j+m \le q-1$, we see that $m \le q-2$. Hence $q^2-2-j-mq \ge q^2-2-1-(q-2)q \ge q-1,$ implying that the condition $k \le q^2-2-j-mq$ is trivially satisfied. Hence
\begin{align*}
|\mathcal G_2|&= \sum_{j=1}^{q-1}\sum_{m=0}^{q-1-j}\sum_{k=0}^{q-1} \sum_{i=0}^{q^2-2-j-k-mq}1=\sum_{j=1}^{q-1}\sum_{m=0}^{q-1-j}\sum_{k=0}^{q-1} (q^2-1-j-k-mq)\\
& = \sum_{j=1}^{q-1}\sum_{m=0}^{q-1-j} (q^2-1-j-mq)q-\binom{q}{2}=\sum_{j=1}^{q-1}\left((q^2-1-j)q-\binom{q}{2}\right)(q-j)-q^2\binom{q-j}{2}\\
& = \sum_{j=1}^{q-1}\left((q^2-q)q-\binom{q}{2}\right)(q-j)-(q^2-2q)\binom{q-j}{2}
=\left((q^2-q)q-\binom{q}{2}\right)\binom{q}{2}-(q^2-2q)\binom{q}{3}.
\end{align*}
The desired equality now follows.
\end{proof}

\begin{lemma}\label{lem:G3}
Let $\mathcal G_3$ and $G_3=\varphi(\mathcal{G}_3)$ be as above. Then $\varphi$ restricted to $\mathcal G_3$ is injective and
$$|G_3|=\frac12 q^2(q-1)\left( \frac13 q - \frac23\right).$$
\end{lemma}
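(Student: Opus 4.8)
The plan is to mirror the structure of the two preceding lemmas: first establish that $\varphi$ restricted to $\mathcal G_3$ is injective, and then count $|\mathcal G_3|$ directly, so that $|G_3|=|\mathcal G_3|$ equals the claimed expression. Recall that elements of $\mathcal G_3$ have the form $(i,j,k,0,\dots,0,n_s,0,\dots,0)$ with exactly one nonzero $n_s$ equal to $1$, subject to $k\le q-1$, $1\le s\le q-2$, and the auxiliary constraint $i+k+(s+1)q\ge q^2-1$, together with $j\le q-1$ and inequality \eqref{eq:inquality}, which here reads $i+j+k+((s+1)q-s)\le q^2-2$.

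For injectivity, I would first pin down $s$: on $\mathcal G_3$ we have $\varphi(i,j,k,0,\dots,n_s,\dots,0)=iq^3+j+kq+(s+1)q^2+1$, so the value modulo $q^3$ isolates the contribution $j+kq+(s+1)q^2$. The idea, as in Lemmas \ref{lem:G1} and \ref{lem:G2}, is to bound $j+kq+(s+1)q^2$ strictly below $q^3$ using $j\le q-1$, $k\le q-1$, and $s+1\le q-1$; this gives $j+kq+(s+1)q^2\le (q-1)+(q-1)q+(q-1)q^2=q^3-1<q^3$, so two elements of $\mathcal G_3$ taking the same $\varphi$-value must agree on the low-order part $j+kq+(s+1)q^2$ and on $i$. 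Then reading this common value modulo $q$, modulo $q^2$, and dividing out recovers $j$, then $k$, then $s+1$ in turn, hence the whole tuple. This is precisely the base-$q$ digit-extraction argument already used twice, so it should go through cleanly.

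For the count, I would sum over the single active index $s$ from $1$ to $q-2$, and for each fixed $s$ count the pairs coming from the remaining free variables $i,j,k$ subject to $k\le q-1$, $j\le q-1$, $i+j+k\le q^2-2-((s+1)q-s)$, and the extra lower bound $i\ge q^2-1-k-(s+1)q$. The role of that lower bound is exactly to restrict to the ``upper'' part of the region and avoid double-counting against $\mathcal G_1$ and $\mathcal G_2$; after substituting $i'=i-(q^2-1-k-(s+1)q)$ when the bound is active (and noting $i=0$ suffices otherwise), the constraint on $i,j,k$ collapses to a small simplex whose size depends on $s$ through the quantity $(s+1)q-s$. Carrying out the resulting arithmetic sum over $s$ and simplifying should yield $\tfrac12 q^2(q-1)(\tfrac13 q-\tfrac23)$.

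The main obstacle I anticipate is the bookkeeping in the counting step rather than the injectivity: one has to correctly handle the interaction between the upper bound $i+j+k\le q^2-2-((s+1)q-s)$ and the lower bound $i+k+(s+1)q\ge q^2-1$, making sure the summation range for each variable is nonempty exactly when the geometry intends and that the final sum over $s$ telescopes to the stated cubic in $q$. Once the region is parametrized cleanly, the evaluation is a routine application of standard binomial-coefficient summation identities (as in \cite[Eqn. (5.10)]{GKP}), so I expect the genuine content to lie in setting up the correct index ranges, after which the closed form for $|G_3|$ follows by direct computation.
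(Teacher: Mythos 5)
Your proposal is correct and follows essentially the same route as the paper: injectivity via uniqueness of the base-$q$ expansion of $j+kq+(s+1)q^2$ (all digits lying in $[0,q-1]$), and the count obtained by observing that for fixed $s,j,k$ the admissible values of $i$ form an interval of length $s-j$ (nonempty exactly when $j\le s-1$), which sums to $q\sum_{s=1}^{q-2}\binom{s+1}{2}=q\binom{q}{3}=\frac12 q^2(q-1)\left(\frac13 q-\frac23\right)$. The paper simply derives the constraint $j\le s-1$ up front by combining the lower bound $i+k+(s+1)q\ge q^2-1$ with inequality \eqref{eq:inquality}, which is exactly the bookkeeping interaction you flagged.
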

\begin{proof}
If $(i,j,k,0,0,\dots,0,n_s,0,\dots,0) \in \mathcal G_3$, then $\varphi(i,j,k,0,0,\dots,0,n_s,0,\dots,0)=iq^3+j+kq+(s+1)q^2+1$ and by definition we have $n_s=1$, $1 \le s \le q-2$, $0 \le j \le q-1$, $0 \le k \le q-1$ and $i+k+(s+1)q \ge q^2-1$ (that is $i+k+sq \ge q^2-q-1$). Moreover, inequality \eqref{eq:inquality} gives that $i+j+k+s(q-1) \le q^2-q-2.$ Note that the inequalities $i+k+sq \ge q^2-q-1$ and $i+j+k+s(q-1) \le q^2-q-2$ only can be satisfied simultaneously, if $j \le s-1$, so we may assume this as well in the remainder of the proof.

Now suppose $(i_1,j_1,k_1,0,0,\dots,0,n_s,0,\dots,0),(i_1,j_1,k,0,0,\dots,0,1,0,\dots,0) \in \mathcal G_3$ and $$i_1q^3+j_1+k_1q+(s_1+1)q^2=i_2q^3+j_2+k_2q+(s_2+1)q^2.$$ Since the $q$-ary expansion of a number is unique, we immediately obtain that $j_1=j_2$, $k_1=k_2$ and $s_1=s_2$, since all variables involved at between $0$ and $q-1$. Hence $i_1=i_2$ as well and the first part of the lemma follows.

Now we compute $|G_3|$. Recall that we may assume $j \le s-1$. Hence
\begin{align*}
|\mathcal G_3|&= \sum_{s=1}^{q-2}\sum_{j=0}^{s-1}\sum_{k=0}^{q-1}\sum_{i=q^2-q-1-k-sq}^{q^2-q-2-j-k-s(q-1)}1=
\sum_{s=1}^{q-2}\sum_{j=0}^{s-1}\sum_{k=0}^{q-1}(s-j)\\
& = q\sum_{s=1}^{q-2}\sum_{j=0}^{s-1}(s-j)=q\sum_{s=1}^{q-2}\binom{s+1}{2}=q\binom{q}{3}.
\end{align*}
The desired equality now follows.
\end{proof}

Finally to obtain an estimate for $|G|$, we need to study the intersections of the sets $G_1$, $G_2$ and $G_3$. It turns out that they are disjoint, as we will now show.

\begin{lemma}\label{G1G2G3disjoint}
The sets $G_1$, $G_2$ and $G_3$ defined above are mutually disjoint.
\end{lemma}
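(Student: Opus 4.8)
The plan is to show that the three sets $G_1$, $G_2$, $G_3$ are mutually disjoint by examining, for any element lying in two of them, the $q$-ary digit structure of the underlying integer. Since every element of $G$ has the form $\varphi(\cdots)=iq^3+j+kq+m(q^2+1)+\sum_s n_s(s+1)q^2+1$, I would first subtract the harmless $+1$ and study $N:=\varphi(\cdots)-1$ modulo suitable powers of $q$. The key observation is that the three families $\mathcal G_1$, $\mathcal G_2$, $\mathcal G_3$ impose genuinely different constraints on the coefficients $j,k,m$ and $n_s$, and these constraints become visible once one reads off the base-$q$ expansion of $N$.

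The main steps I would carry out are the following. First, handle the pair $G_1\cap G_2$: an element of $G_1$ has $j=0$ whereas an element of $G_2$ has $1\le j\le q-1$. Writing $N\bmod q$ for an element of $G_1$ gives $N\equiv m\pmod q$ with $0\le m\le q-1$, whereas for an element of $G_2$ one gets $N\equiv j+m\pmod q$; but the more robust route is to recover the intended digits by the uniqueness arguments already established in Lemmas \ref{lem:G1} and \ref{lem:G2}. Concretely, if a single integer $N$ admits a representation coming from $\mathcal G_1$ and one coming from $\mathcal G_2$, I would compare them: the $\mathcal G_1$-representation forces the lowest digit (the coefficient of $q^0$) to satisfy the $j=0$ pattern, while the $\mathcal G_2$-representation forces $1\le j\le q-1$, and tracking the constant term modulo $q$ yields a contradiction because in the $\mathcal G_1$ case the residue of $N$ modulo $q$ equals $m$ while the $q^2$-part also involves $m$, forcing the same $m$ in both and hence $j=0$, contrary to $j\ge 1$.

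Second, for $G_3$ I would use that every element of $\mathcal G_3$ has $m=0$ and exactly one $n_s=1$ with $1\le s\le q-2$, so that $N=iq^3+j+kq+(s+1)q^2$ with $0\le j\le q-1$, $0\le k\le q-1$ and $2\le s+1\le q-1$. Because all of $j,k,s+1$ lie strictly between $0$ and $q-1$, the digits $j$, $k$, $s+1$ of $N$ in base $q$ (in positions $q^0,q^1,q^2$) are uniquely determined, exactly as in the proof of Lemma \ref{lem:G3}. I would then show that neither a $G_1$-element nor a $G_2$-element can produce the $q^2$-digit $s+1\ge 2$ simultaneously with the correct lower digits: for $G_1$ one has $j=0$ so the $q^0$-digit vanishes, forcing $j=0$ in the $G_3$ reading and then the defining inequality $i+k+(s+1)q\ge q^2-1$ of $\mathcal G_3$ combined with $k\le q-1$ and the $G_1$-bound $i+k+mq\le q^2-2$ (with $m=0$ coming from digit comparison) yields a numerical contradiction; for $G_2$ the key is that its $q^2$-digit is forced by $m$ with $j+m\le q-1$, which cannot match a $q^2$-digit of size $s+1\ge 2$ together with the $\mathcal G_3$-inequalities.

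The hard part will be carrying out the digit bookkeeping cleanly, because $N$ mixes the coefficient $m$ into both the $q^0$-position (via the $+m$ from $m(q^2+1)$ modulo $q$) and the $q^2$-position, so the base-$q$ expansion of $N$ is not literally $(\ldots, n\text{-part}, m, k, \text{constant})$; carries can occur. The safest strategy, which I would adopt, is to avoid naive digit extraction and instead mimic the uniqueness proofs of the three preceding lemmas: suppose $N$ has representations from two of the sets, reduce modulo $q$ and then modulo $q^2$ to pin down the relevant coefficients, and derive a contradiction from the incompatible sign or range constraints ($j=0$ versus $j\ge1$; $m=0$ versus $m\ge0$ with the $n_s$-inequality; the lower bound $i+k+(s+1)q\ge q^2-1$ in $\mathcal G_3$). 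Once all three pairwise intersections are shown empty, the lemma follows.
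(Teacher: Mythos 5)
Your overall architecture coincides with the paper's: assume one integer admits representations from two of the families, reduce modulo $q$ to identify the constant-term contributions, split off the $q^3$-part using range bounds, and derive a contradiction from the defining constraints of $\mathcal G_1$, $\mathcal G_2$, $\mathcal G_3$. However, several of the concrete steps you write down would fail, and they fail exactly where the real work lies. Your treatment of $G_1\cap G_3$ starts from the claim that for a $\mathcal G_1$-element ``the $q^0$-digit vanishes''; this is false (and inconsistent with your own earlier, correct, remark): since $m(q^2+1)=mq^2+m$, the residue of $N$ modulo $q$ is $m$, not $0$. The correct conclusion from reduction modulo $q$ is $m_1=j_3$, where both sides may be nonzero; nothing forces $j_3=0$ or $m_1=0$, and the contradiction does not need this. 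It comes instead --- after showing $i_1=i_3$ and $k_1+m_1q=k_3+(s+1)q$, which requires the bounds $k_1+m_1q<q^2$ (from inequality \eqref{eq:inquality}) and $k_3+(s+1)q<q^2$ (from $k_3\le q-1$, $s+1\le q-1$) --- from the clash between the lower bound $i_3+k_3+(s+1)q\ge q^2-1$ in the definition of $\mathcal G_3$ and the upper bound $i_1+k_1+m_1q\le q^2-2$ given by \eqref{eq:inquality}. Likewise, for $G_2\cap G_3$ the obstruction is not that the $q^2$-digit coming from $m_2$ ``cannot match'' $s+1\ge 2$: the equality $k_2+m_2q=k_3+(s+1)q$ with $k_2,k_3\le q-1$ forces precisely $k_2=k_3$ and $m_2=s+1$, which is perfectly possible; the contradiction is again the same inequality clash.

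In $G_1\cap G_2$, your key assertion that the $q^2$-part ``forces the same $m$ in both'' is unjustified as stated: base-$q$ digit uniqueness does not apply here, because $\mathcal G_1$ imposes no bound $k\le q-1$ (in $\mathcal G_1$ the entry $k$ can be as large as $q^2-2$), so $k_1+m_1q=k_2+m_2q$ does not by itself yield $m_1=m_2$. The missing ingredient is the bound $k_2\le q-1$ from the definition of $\mathcal G_2$ together with $k_1\ge 0$: since $m_1=j_2+m_2$ with $j_2\ge 1$ gives $m_1>m_2$, one gets $k_1=k_2-(m_1-m_2)q\le (q-1)-q<0$, a contradiction (the paper reaches the same point via $k_1\equiv k_2\pmod q$ and a sign argument). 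Your closing fallback --- mimic the uniqueness proofs of Lemmas \ref{lem:G1}--\ref{lem:G3}, reduce modulo $q$ and $q^2$, and invoke ``incompatible sign or range constraints'' --- is indeed the right meta-strategy and is what the paper executes, but as written it restates the goal rather than proving it: the specific range facts just listed ($k\le q-1$ in $\mathcal G_2$ and $\mathcal G_3$, the bound $k+mq<q^2$ extracted from \eqref{eq:inquality}, and the lower bound in $\mathcal G_3$) constitute the actual content, and your sketch either omits them or replaces them with incorrect digit claims.
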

\begin{proof}
\noindent
{\bf Part 1. $G_1 \cap G_2 = \emptyset$.}
Let $(i_1,0,k_1,m_1,0,\dots,0) \in \mathcal G_1$, $(i_2,j_2,k_2,m_2,0,\dots,0) \in \mathcal G_2$ and suppose that $$i_1q^3+k_1q+m_1(q^2+1)=i_2q^3+j_2+k_2q+m_2(q^2+1).$$ Since $0 \le m_1 \le q-1$ and $1 \le j_2+m_2 \le q-1$, we see that $m_1=j_2+m_2$ and hence that $i_1q^2+k_1+m_1q=i_2q^2+k_2+m_2q.$ Note that $m_1-m_2=j_2 \ge 0$, where the inequality follows from the definition of $\mathcal G_2$. Inequality \eqref{eq:inquality} implies that $k_1+m_1q < q^2$ as well as $k_2+m_2q < q^2$. Hence we obtain $i_1=i_2$ and $k_1+m_1q=k_2+m_2q$, whence $(m_1-m_2)q=k_2-k_1$. This implies that $k_1 \equiv k_2 \pmod{q}$, but since $k_1 \ge 0$ and $0 \le k_2 \le q-1$ we can deduce $k_1-k_2 \ge 0$. On the other hand we already have seen that $m_1-m_2=j_2 \ge 1$, but then we arrive at a contradiction, since $0<(m_1-m_2)q=k_2-k_1 \le 0$.

\noindent
{\bf Part 2. $G_1 \cap G_3 = \emptyset$.}
Let $(i_1,0,k_1,m_1,0,\dots,0) \in \mathcal G_1$, $(i_3,j_3,k_3,0,0,\dots,0,1,0,\dots,0) \in \mathcal G_3$ and suppose that $$i_1q^3+k_1q+m_1(q^2+1)=i_3q^3+j_3+k_3q+(s+1)q^2.$$ Similarly as in part 1 above, we obtain that $m_1=j_3$, whence $i_1q^2+k_1+m_1q=i_3q^2+k_3+(s+1)q$, as well as the inequality $k_1+m_1q<q^2$. However, since $k_3 \le q-1$ and $s+1 \le q-1$, we also have $k_3+(s+1)q<q^3$. Therefore we obtain that $i_1=i_3$ as well as $k_1+m_1q=k_3+(s+1)q$. This implies that $$i_3+k_3+(s+1)q=i_1+k_1+m_1q \le q^2-2,$$
where we have used inequality \eqref{eq:inquality} to obtain the inequality. On the other hand $i_3+k_3+(s+1)q \ge q^2-1$ by the definition of $\mathcal G_3$ and we arrive at a contradiction.

\noindent
{\bf Part 3. $G_2 \cap G_2 = \emptyset$.}
Let $(i_2,j_2,k_2,m_2,0,\dots,0) \in \mathcal G_2$, $(i_3,j_3,k_3,0,0,\dots,0,1,0,\dots,0) \in \mathcal G_3$ and suppose that $$i_2q^3+j_2+k_2q+m_2(q^2+1)=i_3q^3+j_3+k_3q+(s+1)q^2.$$
Reasoning very similarly as in Part 1 and Part 2, we obtain $j_2+m_2=j_3$, $i_2=i_3$ and $$i_3+k_3+(s+1)q=i_2+k_2+m_2q \le q^2-2.$$ Again we arrive at a constriction.
\end{proof}

We are now ready to prove the main theorem of this section.
\begin{theorem}
Let $P$ be a point of $\cX$ with $P\not\in \cX(\mathbb{F}_{q^6})$. Then the set of gaps of $H(P)$ is given by,

$$G=\{iq^3+kq+m(q^2+1)+\sum_{s=1}^{q-2} n_s ((s+1)q^2)+j+1 \mid i,j,k,m,n_1,\ldots,n_{q-2} \in \mathbb{Z}_{\geq 0}, j \le q-1,\ \makebox{and}$$
$$ i+j+k+mq+\sum_{s=1}^{q-2} n_s ((s+1)q-s) \leq q^2-2\}.$$
Moreover, the set of Weierstrass points $W$ on $\cX$ coincides with $\cX(\mathbb{F}_{q^6})$.
\end{theorem}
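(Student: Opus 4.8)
The plan is to turn the gap-set formula into a counting statement and then read off the Weierstrass points. By Proposition \ref{gaps}, every element of $G$ is a gap of $H(P)$, so $G\subseteq\mathbb{N}\setminus H(P)$ and hence $|G|\le g(\cX)$ by the Weierstrass gap theorem. It therefore suffices to prove $|G|\ge g(\cX)$. Since $\mathcal G_1,\mathcal G_2,\mathcal G_3\subseteq\mathcal G$, their images $G_1,G_2,G_3$ lie in $G=\varphi(\mathcal G)$, and by Lemma \ref{G1G2G3disjoint} they are pairwise disjoint, so $|G|\ge|G_1|+|G_2|+|G_3|$. Substituting the cardinalities from Lemmas \ref{lem:G1}, \ref{lem:G2} and \ref{lem:G3} yields
$$|G_1|+|G_2|+|G_3|=\tfrac12 q^2(q-1)\Big(\tfrac13 q^2+\tfrac56 q+\tfrac12+\tfrac23 q^2-\tfrac16 q-\tfrac56+\tfrac13 q-\tfrac23\Big)=\tfrac12 q^2(q-1)(q^2+q-1)=g(\cX).$$
Combined with $|G|\le g(\cX)$ this forces $|G|=g(\cX)$, so $G$ is exactly the gap set of $H(P)$ for every $P\notin\cX(\mathbb{F}_{q^6})$.

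For the Weierstrass points, I would first observe that the computation above produces the \emph{same} semigroup $\mathbb{N}\setminus G$ at every $P\notin\cX(\mathbb{F}_{q^6})$, as $G$ depends only on $q$. Since $\cX(\mathbb{F}_{q^6})$ is finite, this common semigroup is necessarily the generic one, so every point outside $\cX(\mathbb{F}_{q^6})$ is non-Weierstrass and $W\subseteq\cX(\mathbb{F}_{q^6})$.

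For the reverse inclusion I would separate the semigroups by symmetry. By Observation \ref{obs:largestgapinG} the largest gap of the generic semigroup $\mathbb{N}\setminus G$ equals $2g(\cX)-q^2+1$, which is strictly smaller than $2g(\cX)-1$ for every $q\ge 2$; hence the generic semigroup is not symmetric. On the other hand, the regular differential $dz$ satisfies $(dz)_\cX=(2g(\cX)-2)P_\infty$ (from the proof of Corollary \ref{holom}), so $v_{P_\infty}(dz)=2g(\cX)-2$ and Proposition \ref{prop:holom} shows that $2g(\cX)-1$ is a gap at $P_\infty$; consequently $H(P_\infty)$, and hence $H(P)$ for every $P\in\cX(\mathbb{F}_{q^2})$ (these points form the single orbit $\mathcal O_1$), is symmetric. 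Likewise, the semigroup $T$ attached to a point of $\cX(\mathbb{F}_{q^6})\setminus\cX(\mathbb{F}_{q^2})$ was shown to be symmetric in the previous section. In either case each $P\in\cX(\mathbb{F}_{q^6})$ carries a symmetric, and therefore non-generic, semigroup and is a Weierstrass point, giving $\cX(\mathbb{F}_{q^6})\subseteq W$ and hence $W=\cX(\mathbb{F}_{q^6})$.

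I expect the genuine difficulty to reside entirely in the preparatory lemmas already in place: choosing $\mathcal G_1,\mathcal G_2,\mathcal G_3$ so that $\varphi$ is injective on each, that their images are mutually disjoint, and that their cardinalities are engineered to sum to exactly $g(\cX)$. Granting those, the theorem reduces to the single arithmetic identity $\tfrac12 q^2(q-1)(q^2+q-1)=g(\cX)$ together with the soft symmetry comparison, both of which are routine.
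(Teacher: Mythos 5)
Your proposal is correct and takes essentially the same route as the paper: the identical counting argument ($|G|\le g(\cX)$ since $G$ consists of gaps by Proposition \ref{gaps}, and $|G|\ge |G_1|+|G_2|+|G_3|=g(\cX)$ by Lemmas \ref{lem:G1}--\ref{G1G2G3disjoint}), followed by the same largest-gap comparison ($2g(\cX)-q^2+1$ versus $2g(\cX)-1$) to identify $W=\cX(\mathbb{F}_{q^6})$. Your explicit cofiniteness argument for $W\subseteq\cX(\mathbb{F}_{q^6})$ and the symmetry framing merely spell out what the paper leaves implicit in its final sentence.
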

\begin{proof}
Combing Lemmas \ref{lem:G1}, \ref{lem:G2}, \ref{lem:G3}, and \ref{G1G2G3disjoint} we see that $$|G| \ge |G_1|+|G_2|+|G_3|=\frac12 q^2(q-1)(q^2+q-1)=g(\cX).$$ Since we know that $H(P)$ has exactly $g(\cX)$ gaps, Proposition \ref{gaps} then implies that $H(P)=\mathbb{N} \setminus G$. From Observation \ref{obs:largestgapinG}, we deduce that the largest gap in $H(P)$ is $2g(\cX)-q^2+1$, while we already know that for any $P \in \cX(\fqs)$, the largest gap is $2g(\cX)-1$. This implies the last statement in the theorem.
\end{proof}
The proof also shows that the gaps of $H(P)$ are precisely $G_1 \cup G_2 \cup G_3$, which is convenient when checking if a particular number is a gap or not. For example, this allows us to compute the multiplicity (smallest positive element) of $H(P)$ fairly easily.
\begin{corollary}
Let $P$ be a point of $\cX$ with $P\not\in \cX(\mathbb{F}_{q^6})$. The multiplicity of $H(P)$ is equal to $q^3-1$.
\end{corollary}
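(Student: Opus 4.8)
The multiplicity of $H(P)$ is the smallest positive integer lying in $H(P)$, equivalently the smallest positive integer that is not a gap. The plan is to exploit the fact, established in the proof of the preceding theorem, that the set of gaps of $H(P)$ is exactly the disjoint union $G_1\cup G_2\cup G_3$. Thus I will determine $\min\{N\in\mathbb{Z}_{>0} : N\notin G_1\cup G_2\cup G_3\}$ and show it equals $q^3-1$. A simple but crucial preliminary remark is that every element $N$ of $G_1\cup G_2\cup G_3$ with $N<q^3$ must arise from a tuple with $i=0$, since the term $iq^3$ in $\varphi$ is nonnegative and all remaining terms are nonnegative as well; this reduces the whole analysis to the regime $i=0$.

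First I would show $q^3-1\notin G_1\cup G_2\cup G_3$, so that $q^3-1\in H(P)$. Setting $i=0$, a direct optimization of $\varphi$ against inequality \eqref{eq:inquality} bounds the largest element below $q^3$ contributed by each of the three sets: the extreme choices $(0,0,q-2,q-1,0,\dots,0)$ for $\mathcal G_1$, $(0,1,q-1,q-2,0,\dots,0)$ for $\mathcal G_2$, and $(0,q-3,q-1,0,\dots,0,n_{q-2}=1)$ for $\mathcal G_3$ give the maximal values $q^3-q$, $q^3-q^2$ and $q^3-2$ respectively. Since all three are at most $q^3-2<q^3-1$, the integer $q^3-1$ cannot be a gap, hence it belongs to $H(P)$.

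It then remains to prove the coverage statement $\{1,\dots,q^3-2\}\subseteq G_1\cup G_2\cup G_3$, which shows that no smaller positive integer lies in $H(P)$. Given $N$ in this range I set $t=N-1<q^3$ (forcing $i=0$) and split according to the size of $t$. For $q^3-q\le t\le q^3-3$ I will place $t$ in $G_3$ through the family $(0,j,q-1,0,\dots,0,n_{q-2}=1)$ with $j=t-(q^3-q)\in\{0,\dots,q-3\}$, verifying the defining conditions of $\mathcal G_3$, in particular $k+(s+1)q\ge q^2-1$ and inequality \eqref{eq:inquality}; note that this window is empty precisely when $q=2$. For $0\le t\le q^3-q-1$ I will cover $t$ using $G_1\cup G_2$, organised by the residue $r=t\bmod q$: when $t\ge rq^2+r$ I take $m=r$ in $\mathcal G_1$, and when $t<rq^2+r$ I take $m'=\lfloor\lfloor t/q\rfloor/q\rfloor$, $j=r-m'$, $k=\lfloor t/q\rfloor\bmod q$ in $\mathcal G_2$. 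The key elementary point is that within each residue class the $G_1$-values run in steps of $q$ up to $q^3-2q+r$, and the next such value $q^3-q+r$ already exceeds $q^3-q-1$; this shows that no value of the class is skipped in the range handled by $G_1$, while the complementary values are exactly those caught by $G_2$.

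The verifications that the explicit tuples above satisfy the inequalities defining $\mathcal G_1$, $\mathcal G_2$ and $\mathcal G_3$ are the bulk of the computation, but the main obstacle is conceptual rather than arithmetic: one must confirm that the three regimes, namely the lower part of each residue class from $\mathcal G_2$, the upper part from $\mathcal G_1$, and the top window $[q^3-q,q^3-3]$ from $\mathcal G_3$, together tile $\{0,\dots,q^3-3\}$ without leaving any integer uncovered, while none of them can realize $t=q^3-2$. Once this tiling is confirmed, the minimality of $q^3-1$ in $H(P)$ follows at once.
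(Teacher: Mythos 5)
Your proposal is correct, and its two halves compare differently with the paper's own proof. For the coverage half (every integer in $\{1,\dots,q^3-2\}$ is a gap) your argument is the paper's argument in different notation: the paper writes $a-1=c_0+c_1q+c_2q^2$ in base $q$ and assigns $a$ to $\mathcal G_1$ when $c_2\ge c_0$ and $(c_1,c_2)\neq(q-1,q-1)$, to $\mathcal G_2$ when $c_2<c_0$, and to $\mathcal G_3$ when $(c_1,c_2)=(q-1,q-1)$; your bookkeeping by the residue $r=t\bmod q$ reproduces exactly this tiling, since $t\ge rq^2+r$ is equivalent to $c_2\ge c_0$, your $\mathcal G_1$ and $\mathcal G_2$ tuples coincide with the paper's (e.g.\ $(t-rq^2-r)/q=c_1+(c_2-c_0)q$), and your top window $[q^3-q,q^3-3]$ is precisely the digit pattern $(c_1,c_2)=(q-1,q-1)$ with $c_0\le q-3$. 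Where you genuinely diverge is the non-gap half, i.e.\ $q^3-1\in H(P)$: the paper gets this in one line from St\"ohr--Voloch theory ($P$ is not a Weierstrass point by the theorem just proved, hence $q^3-1$ and $q^3$ are non-gaps by \cite[Proposition 10.9]{HKT}), whereas you use the other consequence of that theorem, namely that the gap set is exactly $G_1\cup G_2\cup G_3$, and bound the largest element below $q^3$ of each piece by $q^3-q$, $q^3-q^2$ and $q^3-2$ respectively. These bounds are correct: maximizing $\varphi$ with $i=0$ subject to \eqref{eq:inquality} gives $q(q^2-2)+m+1\le q^3-q$ on $\mathcal G_1$; on $\mathcal G_2$ the constraints $j\ge 1$ and $j+m\le q-1$ pull the maximum down to $q^3-q^2$; on $\mathcal G_3$ the two inequalities force $s=q-2$, $k=q-1$, $j\le q-3$, giving $q^3-2$. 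The trade-off: the paper's route is shorter and yields $q^3\in H(P)$ for free, while yours is entirely self-contained within the semigroup combinatorics of the section, requires no external citation, and gives the extra information that $q^3-2$ is the largest gap below $q^3$ and that $\mathcal G_3$ realizes it. Both are complete proofs.
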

\begin{proof}
From St\"ohr-Voloch Theory we know that $q^3-1$ and $q^3$ are non-gaps at $P$, since $P$ is not a Weierstrass point; see \cite[Proposition 10.9]{HKT}. It is also not difficult to verify this directly.
On the other hand, let $1 \le a \le q^3-2$ be an integer and write $a-1=c_0+c_1q+c_2q^2$ with $0\le c_t \le q-1$ for $t=1,2,3$. Then we distinguish three cases.

\noindent
{\bf Case 1. $c_2 \ge c_0$ and $(c_1,c_2) \neq (q-1,q-1)$.}
In this case a direct verification shows that $a=\varphi(0,0,c_1+(c_2-c_0)q,c_0,0\dots,0)$ and that $(0,0,c_1+(c_2-c_0)q,c_0,0\dots,0) \in \mathcal G_1$.

\noindent
{\bf Case 2. $c_2<c_0$.}
We have $a=\varphi(0,c_0-c_2,c_1,c_2,0\dots,0)$ and $(0,c_0-c_2,c_1,c_2,0\dots,0) \in \mathcal G_2$ in this case.

\noindent
{\bf Case 3. $(c_1,c_2) = (q-1,q-1)$}
Note that in this case $c_0 \le q-3,$ since $a-1=c_0+(q-1)q+(q-1)q^2 \le q^3-3$. One then checks that $a=\varphi(0,c_0,q-1,0,0,\dots,0,1)$ and that $(0,c_0,q-1,0,0,\dots,0,1) \in \mathcal G_3$.
\end{proof}

At this point seems to be reasonable to ask for the generators of the Weierstrass semigroup $H(P)$ for $P \not\in \cX(\mathbb{F}_{q^6})$. Their explicit determination seems to be a challenging task as the following examples show. In particular the number of generators of $H(P)$ seems to grow quickly with respect to $q$.

\begin{example}
\begin{itemize}
Let $P \in \cX$ such that $P \not\in \cX(\mathbb{F}_{q^6})$.
\item If $q=2$ then $g=10$ and $$G=\{1,2,3,4,5,6,9,10,11,17\}.$$ Clearly $7$ and $8$ must be generators of $H(P)$ and since $12 \not\in \langle 7,8\rangle$ and $13 \not\in \langle 7,8,12 \rangle$ we obtain that also $12$ and $13$ are generators. Note that $\langle 7,8,12,13\rangle \cap \{0, \ldots,20\}=\{7,8,12,13,14,15,16\}$ and hence also $18$ is a generator. In fact $$H(P)=\langle 7,8,12,13,18\rangle.$$
Moreover, if $P \in \cX$ then
$$H(P)=\begin{cases} \{{ 0, 6, 8, 9, 12, 14, 15, 16, 17, 18,20,\ldots }\}, \ \makebox{if} \ P \in \cX(\mathbb{F}_{4}), \\ \{ 0, 7, 8, 9, 13, 14, 15, 16, 17, 18,20,\ldots\}, \ \makebox{if} \ P \in \cX(\mathbb{F}_{64}) \setminus \cX(\mathbb{F}_4), \\ \{0,7,8,12,13,14,15,16,18,19,20 \ldots\}, \ \makebox{otherwise}. \end{cases}$$
\item If $q=3$ then $g=99$ and
$$G=\{ 1, 2, 3, 4, 5, 6, 7, 8, 9, 10, 11, 12, 13, 14, 15, 16, 17, 18, 19, 20, 21, 22,
23, 24, 25, 28, 29, 30, 31, 32, 33, 34, $$
$$35, 36, 37, 38, 39, 40, 41, 42, 43, 44,
45, 46, 47, 48, 49, 55, 56, 57, 58, 59, 60, 61, 62, 63, 64, 65, 66, 67, 68, 69,$$
$$70, 71, 73, 82, 83, 84, 85, 86, 87, 88, 89, 90, 91, 92, 93, 94, 95, 109, 110,
111, 112, 113, 114, 115, 116, 118, 119,$$
$$ 136, 137, 138, 139, 140, 142, 163, 164,
166, 190 \}.$$
Arguing as for the previous case, one can prove that
$$H(P)=\langle 26,27,50,51,72,74,75,96,97,117,120,121,141,145,165\rangle.$$
\end{itemize}
It is unclear what the number of generators for general $q$ is. For $q=4$ the semigroup turns out to have $28$ generators.
\end{example}

Collecting the results in the paper, we have proven Theorem \ref{mainth} and Corollary \ref{mainth2} from the introduction. We finish by summing up some further facts on the various semigroups on $\cX$ in a table, leaving a question mark for the minimal number of generators in the case $P \not\in \cX(\fqs)$. Determining this number could be interesting future work.
\\

\begin{tabular}{l|l|l|l}
$P$               & multiplicity & conductor   & number of generators\\
\hline
$P \in \cX(\fqq)$ & $q^3-q^2+q$  & $2g(\cX)-1$ & $3$ \\
$P \in \cX(\fqs)\setminus \cX(\fqq)$ & $q^3-q+1$  & $2g(\cX)-1$ & $q+2$ \\
$P \not\in \cX(\fqs)$ & $q^3-1$  & $2g(\cX)-q^2+1$ & ?
\end{tabular}

\section*{Acknowledgments}

The first author gratefully acknowledges the support from The Danish Council for Independent Research (Grant No.~DFF--4002-00367). The second author would like to thank the Italian Ministry MIUR, Strutture Geometriche, Combinatoria e loro Applicazioni, Prin 2012 prot.~2012XZE22K and GNSAGA of the Italian INDAM.

\vspace{1ex}
\noindent
Peter Beelen

\vspace{.5ex}
\noindent
Technical University of Denmark,\\
Department of Applied Mathematics and Computer Science,\\
Matematiktorvet 303B,\\
2800 Kgs. Lyngby,\\
Denmark,\\
pabe@dtu.dk\\

\vspace{1ex}
\noindent
Maria Montanucci

\vspace{.5ex}
\noindent
Universita' degli Studi della Basilicata,\\
Dipartimento di Matematica, Informatica ed Economia,\\
Campus di Macchia Romana,\\
Viale dell' Ateneo Lucano 10,\\
85100 Potenza,\\
Italy,\\
maria.montanucci@unibas.it
    \end{document}